\newtheorem{theorem}{Theorem}[section]
\newtheorem{lemma}[theorem]{Lemma}
\newtheorem{remark}[theorem]{Remark}
\newtheorem{proposition}[theorem]{Propositon}
\newtheorem{definition}[theorem]{Definition}
\numberwithin{equation}{section}
\begin{document}
\selectlanguage{english}

\title{On switching properties of time optimal controls for linear ODEs
\thanks{This work was partially supported by
the NNSF of China under grants 11601377, 11901432, 11971022.}}

\author{Shulin Qin\thanks{School of Science, Tianjin University of Commerce, Tianjin 300134, China (\texttt{shulinqin@yeah.net})},\quad Gengsheng Wang\thanks{Center for Applied Mathematics, Tianjin University, Tianjin 300072, China (\texttt{wanggs62@yeah.net})},\quad Huaiqiang Yu\thanks{School of Mathematics, Tianjin University, Tianjin 300354, China (\texttt{huaiqiangyu@tju.edu.cn})}}

\date{}
\maketitle

\begin{abstract}
In this paper, we  present some properties of time optimal controls for linear ODEs with the ball-type control constraint. More precisely, for an optimal control, we build up an upper bound for the number of its switching points;
show that  it jumps from one direction to the reverse direction at each switching point;
give its dynamic behaviour between two consecutive switching points; and study  its switching directions.
\end{abstract}

\vskip 10pt
    \noindent
        \textbf{Keywords.} Time optimal control, Switching problem
\vskip 10pt
    \noindent
        \textbf{2010 AMS Subject Classifications.} 93C15, 49J15

\section{Introduction}\label{introduce}

{\bf Notation.}  Let $\mathbb{N}\triangleq\{0,1,2,\ldots\}$,  $\mathbb{N}^+\triangleq\{1,2,\ldots\}$ and $\mathbb{R}^+\triangleq (0,+\infty)$.
Given a subset $E\subset \mathbb{R}^+$,  write  respectively $\sharp E$ and $|E|$ for its cardinality and measure.
Write
 $B_1(0)$  for the unit closed ball in $\mathbb{R}^m$, centered at
 $0\in\mathbb{R}^m$, write $\partial B_1(0)$ for the boundary of $B_1(0)$.
 Given a nonempty subset $S\subset \mathbb{R}$ and a number $\hat t\in \mathbb{R}$, write $\hat t+S\triangleq
 \{\hat t+s: s\in S\}$.
 Given $A\in\mathbb R^{n\times n}$, $B\in\mathbb R^{n\times m}$ and $j\in\mathbb{N}$, let
 $\mbox{span}\{B,AB,\ldots,A^{j}B\}$ be the linear subspace: $\{\sum_{i=0}^{j}A^{i}Bu_i: u_0,\ldots, u_j\in \mathbb{R}^m\}$.
Denote by
$\mathcal{PC}(\mathbb R^+; E)$ (with $E\subset \mathbb R^m$ a nonempty subset)  the space  of all  piecewise continuous and left continuous functions from  $\mathbb R^+$ to $E$ with {\it finitely many discontinuities}.  By the same way, we can define $\mathcal{PC}(I;E)$, where $I$ is an interval of $\mathbb{R}^+$.

\vskip 5pt
\noindent {\bf Control problem.}
We arbitrarily fix  $A\in\mathbb R^{n\times n}$ and $B\in\mathbb R^{n\times m}\setminus\{0\}$ (with $n,m \in \mathbb N^+$).
For each $x_0\in \mathbb{R}^n$ and each $u\in \mathcal{PC}(\mathbb R^+;\mathbb{R}^m)$, we write $x(\cdot;x_0,u)$ for the solution to the  control system:
\begin{eqnarray}\label{system}
 x'(t) = Ax(t)+Bu(t),\;\;t\geq 0; ~
 x(0) = x_0.
\end{eqnarray}
We take the control constrained set as:  $B_1(0)$.
   Given $x_0\in\mathbb{R}^n\setminus\{0\}$, we consider the time optimal control problem:
    \begin{equation}\label{time-optimal-problem}
  (\mathcal{TP})_{x_0}:\;\;
  T^*_{x_0}\triangleq   \inf\big\{\hat t>0:
  \exists\, u\in \mathcal{PC}(\mathbb R^+;B_1(0))
  ~~\mbox{s.t.}~~
  x(\hat t;x_0,u)=0
  \big\}.
\end{equation}
Several notes on this problem are given in order.
\begin{enumerate}
\item [($\textbf{a}_1$)]
 In the problem $(\mathcal{TP})_{x_0}$, we call $T^*_{x_0}$  the optimal time; call $u\in\mathcal{PC}(\mathbb R^+;B_1(0))$  an admissible control if $x(\hat t;x_0,u)=0$ for some $\hat t>0$;  call $u^*_{x_0}\in\mathcal{PC}(\mathbb R^+;B_1(0))$   an optimal control if $x(T^*_{x_0};x_0,u^*_{x_0})=0$
and $u^*_{x_0}(t)=0$ for each $t\in (T^*_{x_0},+\infty)$.
The effective domain of $u^*_{x_0}$ is $(0, T^*_{x_0}]$.

    \item [($\textbf{a}_2$)]
     Many literatures (for instance, \cite{Bellman, Fatt, Fatt-2011, LaSalle, Lin-Wang, Pontryagin, Sontag, Wang-Zhang}) study the following time optimal control problem, with $L^\infty$ controls:
\begin{eqnarray}\label{extended-optimal-time-problem}
 (\widetilde{\mathcal{TP}})_{x_0}:\;\;
  \widetilde T^*_{x_0}\triangleq   \inf\big\{\hat t>0:
  \exists\, u\in L^\infty(\mathbb R^+;B_1(0))
  ~~\mbox{s.t.}~~
  x(\hat t;x_0,u)=0
  \big\}.
\end{eqnarray}
The optimal time, admissible controls, optimal controls to $(\widetilde{\mathcal{TP}})_{x_0}$
are defined in the similar way.
The problems $(\mathcal{TP})_{x_0}$
and $(\widetilde{\mathcal{TP}})_{x_0}$ are equivalent, i.e., {\it they have the same optimal time, and an optimal control to one problem is that of another problem.} (See Lemma \ref{yu-lemma-10-11-1} in Section \ref{appendix,11-3}.)

\end{enumerate}

\noindent{\bf Assumption.}  The following assumption will be effective throughout the paper:
\begin{itemize}
\item [$(\textbf{H})_{x_0}$]  \emph{The problem $(\mathcal{TP})_{x_0}$ has an admissible control.}
\end{itemize}
Several  notes on the assumption $(\textbf{H})_{x_0}$ are given in order.
\begin{enumerate}
\item [($\textbf{b}_1$)] When $(\textbf{H})_{x_0}$ holds,  $(\mathcal{TP})_{x_0}$ has
a unique optimal  control (see  Proposition \ref{pontryagin-maximum-principle}).
\item[($\textbf{b}_2$)]
We  can always  find  $x_0\in\mathbb{R}^n\setminus\{0\}$ so that
$(\mathcal{TP})_{x_0}$ has an admissible control. More precisely, if we let
$$
\mathcal{A}\triangleq\{x_0\in \mbox{span}\{B,AB,\ldots,A^{n-1}B\}:\lim_{T\to +\infty}N(T,x_0)\leq 1\},
$$
where
$$
N(T;x_0)\triangleq \inf\{\|v\|_{L^\infty(0,T;\mathbb{R}^m)}:x(T;x_0,v)=0\},
$$
then when $x_0\in \mathcal{A}$,  $(\mathcal{TP})_{x_0}$ has an admissible control, while
when $x_0\in \mathbb{R}^n\setminus\mathcal{A}$, $(\mathcal{TP})_{x_0}$ has no
 admissible control. This is a direct consequence of
 \cite[Proposition 12 in Appendix A]{Wang-Zhang} and Lemma \ref{yu-lemma-10-11-1} in Section \ref{appendix,11-3} of the current paper.

\item[($\textbf{b}_3$)]
   The following two statements are equivalent:
\begin{itemize}
\item[($\textbf{b}_{3,1}$)] For each $x_0\in\mathbb{R}^n\setminus\{0\}$, $(\mathcal{TP})_{x_0}$
has an admissible control.
    \item[($\textbf{b}_{3,2}$)] It holds that $\mbox{rank}(B,AB,\cdots,A^{n-1}B)=n$ and $\sigma(A)\subset\{\mu\in\mathbb{C}:\mbox{Re}\;\mu\leq 0\}$, where $\sigma(A)$ is the spectrum of $A$.
\end{itemize}
(See  Lemma \ref{yu-lemma-10-11-1} in Section \ref{appendix,11-3}, \cite[Theorem 3.3, pp. 69]{WWXZ}, as well as \cite[Corollary 3.2, pp. 71]{WWXZ}.)

\end{enumerate}

\noindent{\bf Motivations.}
Let $u^*_{x_0}$ be  the optimal control to the problem $(\mathcal{TP})_{x_0}$.
Associated with $u^*_{x_0}$, we introduce  the following concepts:
\begin{definition}\label{Switching point}
Each discontinuity $\hat t$ of $u^*_{x_0}$ is called a switching point of
$u^*_{x_0}$. (It satisfies $u^*_{x_0}(\hat t)=\displaystyle\lim_{t\rightarrow\hat t^-} u^*_{x_0}(t)
 \neq\displaystyle\lim_{t\rightarrow\hat t^+} u^*_{x_0}(t)$.)
 Write
 \begin{equation}\label{1.7-9-28-b}
 \mathbb{S}_{x_0}\triangleq \{\mbox{All  switching points of}\;\; u^*_{x_0}\;\mbox{over}\; (0,T^*_{x_0})\}.
 \end{equation}
\end{definition}
\begin{definition}
  In the case that  $\mathbb{S}_{x_0}\neq \emptyset$, we write
\begin{equation}\label{yu-9-27-10}
    \mathbb{D}_{x_0}\triangleq \left\{v\in \partial B_1(0): \exists\;\hat{t}\in \mathbb{S}_{x_0}\;\mbox{s.t.}\;\lim_{t\to \hat{t}^-}u^*_{x_0}(t)=v\right\}.
\end{equation}
    Each vector in $\mathbb{D}_{x_0}$ is called a  switching direction of  $u^*_{x_0}$. (Notice that  the optimal control $u^*_{x_0}$ has the bang-bang property: $\|u^*_{x_0}(t)\|_{\mathbb{R}^m}=1$ for each $t\in (0,T^*_{x_0}]$ (see Proposition \eqref{pontryagin-maximum-principle}).)
\end{definition}

Several issues on the optimal control should be important and interesting. The first one is about $\mathbb{S}_{x_0}$:
How many switching points does the optimal control have over  $(0,T^*_{x_0})$? What are the locations of these switching points?
How does the optimal control jump at each switching point? The second one is about the dynamic behaviour of the optimal control:
How does the optimal control move between two consecutive switching points? The third one is about  $\mathbb{D}_{x_0}$:
How many switching directions does the optimal control have over  $(0,T^*_{x_0})$? What are these directions?

To our surprise, we can find very few literatures on the studies of the above problems, and most of them (for instance, \cite{Agrachev-Biolo-2017, Agrachev-Biolo-2018, Agrachev-Sachkov,  Biolo, Poggiolini-2017, Pontryagin, Sontag, Sussmann-1979} and references therein)  focus on the upper-bounds
of the number of switching points. These motivate us to study these problems, though we are not able to
solve the above problems completely.

\vskip 5pt
\noindent{\bf Main results.}
    The first main result (of the paper)  concerns $\mathbb{S}_{x_0}$.
 To state it, we let
\begin{eqnarray}\label{A-spectrum-distance}
\mbox{d}_A\triangleq \min\left\{ \pi/|\mbox{Im}\,\lambda| :  \lambda\in\sigma(A) \right\};
\end{eqnarray}
\begin{equation}\label{yu-9-13-b-2}
    \mbox{q}_{A,B}\triangleq\max\{\mbox{dim}\mathcal{V}_A(b): b\;\mbox{is a column of}\;B\},
\end{equation}
   where
   \begin{equation*}\label{1.8-926}
   \mathcal{V}_A(b)\triangleq \mbox{span}\{b,Ab,\ldots,A^{n-1}b\}.
   \end{equation*}
    In (\ref{A-spectrum-distance}), we agree that $1/0=+\infty$.

\begin{theorem}\label{main-theorem}
 Assume that $(\textbf{H})_{x_0}$ holds.  Then the following two conclusions are true:
\begin{description}
  \item[$(i)$]
 If  $I\subset (0,T^*_{x_0})$ is an open interval  with  $|I|\leq \mbox{d}_A$, then
 $\sharp[\mathbb{S}_{x_0}\cap I]\leq (\mbox{q}_{A,B}-1)$.

  \item[$(ii)$] Let  $u^*_{x_0}$ be the optimal control to $(\mathcal{TP})_{x_0}$.
  Let $\hat t\in\mathbb{S}_{x_0}$.  Then  $\lim_{t\rightarrow\hat t^-} u^*_{x_0}$ and
      $\lim_{t\rightarrow\hat t^+} u^*_{x_0}$ are symmetric with respect to
      $0$ (the center of $B_1(0)$), i.e.,
  \begin{eqnarray}\label{main-theorem-1}
  \lim_{t\rightarrow\hat t^-} u^*_{x_0}(t) + \lim_{t\rightarrow\hat t^+} u^*_{x_0}(t) = 0.
  \end{eqnarray}
\end{description}
\end{theorem}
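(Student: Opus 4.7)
Both parts rest on the Pontryagin Maximum Principle (Proposition \ref{pontryagin-maximum-principle}). I would begin by extracting a nonzero $\varphi_0\in\mathbb{R}^n$ such that, writing $\varphi(t)\triangleq e^{-A^{T}t}\varphi_0$, one has
\[
u^*_{x_0}(t)\;=\;\frac{B^{T}\varphi(t)}{\|B^{T}\varphi(t)\|_{\mathbb{R}^m}}\qquad\text{whenever }B^{T}\varphi(t)\neq 0,
\]
with $B^{T}\varphi\not\equiv 0$ on $(0,T^*_{x_0})$ (otherwise the bang-bang property in that proposition would fail). Real-analyticity of $t\mapsto B^{T}\varphi(t)$ then forces its zero set to be finite on compact subintervals, and the discontinuities of $u^*_{x_0}$ can occur only at those isolated zeros.

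For (ii) I fix $\hat t\in\mathbb{S}_{x_0}$; by the previous paragraph $B^{T}\varphi(\hat t)=0$, and analyticity provides an integer $k\geq 1$ and a vector $w\in\mathbb{R}^m\setminus\{0\}$ with
\[
B^{T}\varphi(t)\;=\;(t-\hat t)^k w\;+\;O\bigl((t-\hat t)^{k+1}\bigr)\qquad\text{as }t\to\hat t.
\]
Normalising yields $u^*_{x_0}(t)=\operatorname{sgn}\bigl((t-\hat t)^k\bigr)\,w/\|w\|_{\mathbb{R}^m}+o(1)$. If $k$ were even, both one-sided limits would coincide, contradicting $\hat t\in\mathbb{S}_{x_0}$; hence $k$ is odd, and the one-sided limits are $-w/\|w\|_{\mathbb{R}^m}$ and $+w/\|w\|_{\mathbb{R}^m}$, which is exactly \eqref{main-theorem-1}.

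For (i), every $\hat t\in\mathbb{S}_{x_0}\cap I$ is a common zero of the scalar analytic functions $f_j(t)\triangleq b_j^{T}\varphi(t)$ ($j=1,\dots,m$, where $b_j$ is the $j$-th column of $B$); since $B^{T}\varphi\not\equiv 0$, some $f_{j^*}$ is nontrivial. Letting $p_{j^*}$ denote the minimal monic polynomial with $p_{j^*}(A)b_{j^*}=0$, a direct computation (using $f_{j^*}^{(k)}(t)=(-1)^k(A^k b_{j^*})^{T}\varphi(t)$) gives $p_{j^*}(-d/dt)f_{j^*}=0$. Thus $f_{j^*}$ is a nontrivial solution of a constant-coefficient linear ODE of order $N\triangleq\deg p_{j^*}=\dim\mathcal{V}_A(b_{j^*})\leq \mathrm{q}_{A,B}$ whose characteristic roots lie in $-\sigma(A)$ and therefore have imaginary parts of absolute value at most $\pi/\mathrm{d}_A$. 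I would then invoke the Polya-type oscillation estimate --- \emph{a nontrivial solution of such an ODE has at most $N-1$ zeros in any open interval of length $\leq\pi/\max_k|\mathrm{Im}\,\lambda_k|$} --- to conclude $\sharp[\mathbb{S}_{x_0}\cap I]\leq N-1\leq \mathrm{q}_{A,B}-1$.

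The principal obstacle is the Polya-type zero-counting bound itself, especially at the borderline length $|I|=\mathrm{d}_A$. Classical proofs peel off one real characteristic root (or one complex-conjugate pair) at a time via a Rolle-type argument applied to $h(t)=e^{-\mu t}f(t)$ (or its analogue for a conjugate pair), showing that each reduction drops both the ODE order and the interior zero count by the appropriate amount and eventually reduces to a nontrivial first-order equation with no interior zeros. Executing this carefully in the borderline regime with open intervals, and checking that the analyticity furnished by PMP yields the required strictness of the relevant Rolle steps, is the delicate part that would need to be written out in full.
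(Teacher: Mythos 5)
Your part (ii) is correct and is essentially the paper's argument verbatim: the discontinuities of $u^*_{x_0}$ are confined to the (isolated) zeros of the real-analytic function $t\mapsto B^\top e^{A^\top(T^*_{x_0}-t)}z^*$, and the parity of the vanishing order at $\hat t$ decides whether the two one-sided limits of the normalised function agree or are opposite; oddness is forced because $\hat t$ is a switching point. Nothing to add there.

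Part (i) takes a genuinely different route. The paper (Lemma \ref{main-theorem-lemma1}) counts the common zeros of the vector-valued function $B^\top e^{A^\top t}z^*$ by a duality argument: $\mbox{q}_{A,B}$ zeros $t_1<\cdots<t_{\mbox{q}_{A,B}}$ in $I$ would force $z^*$ to be orthogonal to $\mbox{span}\{e^{At_1}B,\ldots,e^{At_{\mbox{q}_{A,B}}}B\}$, while \cite[Theorem 2.2]{QSL2} asserts that this span already equals $\mbox{span}\{B,AB,\ldots,A^{n-1}B\}\ni z^*$ whenever $t_{\mbox{q}_{A,B}}-t_1<\mbox{d}_A$; no oscillation theory enters. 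You instead project onto a single column $b_{j^*}$ with $f_{j^*}\not\equiv 0$ and count zeros of the scalar exponential polynomial $f_{j^*}$, which satisfies a constant-coefficient ODE of order $\dim\mathcal{V}_A(b_{j^*})\le\mbox{q}_{A,B}$ with characteristic roots in $-\sigma(A)$. The reduction itself is sound (every switching point is a common zero of all components, hence of $f_{j^*}$, and the bound $\dim\mathcal{V}_A(b_{j^*})\le \mbox{q}_{A,B}$ holds for every column by definition), but the P\'olya/disconjugacy estimate of at most $N-1$ zeros on intervals of length up to $\pi/\max_k|\mathrm{Im}\,\lambda_k|$ is precisely the load-bearing step, and you leave it unproved: as written you have traded one nontrivial external input (\cite[Theorem 2.2]{QSL2}) for another that you only sketch, so the proposal is an outline of a different proof rather than a proof. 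One remark that would simplify your write-up: the borderline case $|I|=\mbox{d}_A$ that you flag as delicate never actually arises, because the finitely many zeros lying in the open interval $I$ are contained in a closed subinterval of length strictly less than $\mbox{d}_A$, so only the strict-inequality version of the oscillation bound is needed --- the same observation the paper exploits when it notes $t_{\mbox{q}_{A,B}}-t_1<\mbox{d}_A$ before invoking \cite[Theorem 2.2]{QSL2}.
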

Several notes on  Theorem \ref{main-theorem} are given in order.
\begin{enumerate}
\item [($\textbf{c}_1$)] In plain language, $(i)$ of  Theorem \ref{main-theorem} says that the optimal control
has at most $(\mbox{q}_{A,B}-1)$ switching points on any open interval $I\subset (0,T^*_{x_0})$ with  $|I|\leq d_A$, while $(ii)$ of  Theorem \ref{main-theorem} says that the optimal control jumps from one direction to its reverse direction at each switching point.
\item [($\textbf{c}_2$)] By $(i)$ of Theorem \ref{main-theorem}, we see that if $\sigma(A)\subset \mathbb{R}$,
then $\sharp\mathbb{S}_{x_0}\leq (\mbox{q}_{A,B}-1)$, ({\it i.e., the optimal control
  to $(\mathcal{TP})_{x_0}$ has at most $(\mbox{q}_{A,B}-1)$ switching points over the interval  $(0,T^*_{x_0})$}). Indeed, in the case that  $\sigma(A)\subset\mathbb{R}$, we see from  (\ref{A-spectrum-distance}) that $\mbox{d}_A=+\infty$.

  It deserves to mention two facts: First,  in \cite[Theorem 10, Chapter III, pp.120]{Pontryagin}, the authors proved that each component of the optimal control has at most $(n-1)$ switching points, when
     $\sigma(A)\subset \mathbb{R}$ and $B_1(0)$ is replaced by a rectangle. (There, the optimal control jumps from one vertex to another at a switching point.) Second, it follows from \eqref{yu-9-13-b-2}
  that $\mbox{q}_{A,B}\leq n$.  But, even when
   $\sigma(A)\subset\mathbb{R}$ and $\mbox{rank}(B,AB,\cdots, A^{n-1}B)=n$, there are some examples to show that $\mbox{q}_{A,B}<n$ (see, for instance, \cite[Example 2.8]{QSL2}).

   \item [($\textbf{c}_3$)]
   For the case that $m=1$ and $\mbox{rank}(B,AB,\cdots,A^{n-1}B)=n$,
   \cite[Theorem 15.5, pp.214]{Agrachev-Sachkov} obtained, by a  way  differing from ours,
    the following results: If $I$ is a closed interval in $(0,T^*_{x_0})$ with $|I|\leq \ln(1+\mbox{c}_A^{-1})$, then $\sharp[\mathbb{S}_{x_0}\cap I]\leq (n-1)$. (Here, $\mbox{c}_A\triangleq\max_{1\leq i\leq n}|c_i|$, with $\mbox{det}(\lambda\mathbb{I}_{n\times n} -A)=\lambda^n+c_1\lambda^{n-1}+\cdots+c_n$.)
    We do not know the connection between two numbers $\mbox{d}_{A}$ and $\ln(1+\mbox{c}_A^{-1})$ in general, but when $\sigma(A)\subset \mathbb{R}$, we clearly have $\ln(1+\mbox{c}_A^{-1})<+\infty=\mbox{d}_A$.

For the case that $m=(n-1)$, some related results are presents in \cite[Chapter 5]{Biolo}.

  \item [($\textbf{c}_4$)]  Let $k\triangleq \mbox{rank}(B,AB,\cdots,A^{n-1}B)$. Let
  $$
    \tilde{\mbox{q}}_{A,B}\triangleq \min\{j\in\mathbb{N}^+: \mbox{rank}(B,AB,\cdots,A^{j-1}B)=k\}.
$$
Then one can easily check that $\tilde{\mbox{q}}_{A,B}\leq \mbox{q}_{A,B}$. {\it A natural question is to ask
    if it is true that
    $$
    \sharp[\mathbb{S}_{x_0}\cap I]\leq \tilde{\mbox{q}}_{A,B}-1\;\;\mbox{for each open interval}\;\;I \;\;\mbox{with}\;\;|I|\leq d_A.
    $$ }
    We only know that  $\tilde{\mbox{q}}_{A,B}=\mbox{q}_{A,B}$ when $m=1$.

    \item [($\textbf{c}_5$)] The numbers  $\mbox{q}_{A,B}$ and $\mbox{d}_A$ were introduced in \cite{QSL2}, where
    the controllability of impulse controlled systems of heat equations coupled by constant matrices was studied.

\end{enumerate}

    The second main result concerns how the optimal control $u^*_{x_0}$ (to $(\mathcal{TP})_{x_0}$) moves
     as time $t$ varies.

\begin{theorem}\label{yu-theorem-9-19-1}
 Assume that $(\textbf{H})_{x_0}$ holds.
   Then the dynamic behaviour  of  the  optimal  control to $(\mathcal{TP})_{x_0}$
   has  one and only one of the following four  possibilities:
\begin{description}
  \item [($\textbf{A}_1$)] It stays in one direction on $\partial B_1(0)$ over  $(0,T^*_{x_0})$;

  \item [($\textbf{A}_2$)] It continuously moves on $\partial B_1(0)$, and in any subinterval of $(0,T^*_{x_0})$,
   never stays constantly in one direction;

  \item [($\textbf{A}_3$)] It is a non-constant-valued step  function  taking only two values  (on $\partial B_1(0)$),
  which have opposite directions;

  \item [($\textbf{A}_4$)] It is a  non-constant-valued
  piece-wise continuous function,  and in any subinterval of $(0,T^*_{x_0})$, never stays constantly in one direction.

\end{description}
       \end{theorem}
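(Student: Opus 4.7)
The plan is to combine Pontryagin's maximum principle (Proposition~\ref{pontryagin-maximum-principle}) with the real-analyticity of the associated switching function. The maximum principle supplies a nonzero costate $p_0$ such that, on $(0,T^*_{x_0}]$, $u^*_{x_0}(t)$ maximizes $\langle B^T e^{-A^T t}p_0,v\rangle$ over $v\in B_1(0)$; together with the bang-bang property this gives $u^*_{x_0}(t)=\phi(t)/\|\phi(t)\|_{\mathbb R^m}$ wherever $\phi(t)\triangleq B^T e^{-A^T t}p_0$ is nonzero. Assumption $(\textbf{H})_{x_0}$ forces $x_0\in\mbox{span}\{B,AB,\ldots,A^{n-1}B\}$, which is $A$-invariant; after restricting $(A,B)$ to this subspace the pair becomes controllable, so we may assume $\phi\not\equiv0$. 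The zero set $Z\triangleq\{t\in(0,T^*_{x_0}):\phi(t)=0\}$ is then finite by analyticity, $u^*_{x_0}=\phi/\|\phi\|$ is continuous on $(0,T^*_{x_0})\setminus Z$, and $\mathbb{S}_{x_0}\subset Z$.

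The crux is the following dichotomy: \emph{$u^*_{x_0}$ is constant on some open subinterval of $(0,T^*_{x_0})$ if and only if $\phi(t)=\alpha(t)v$ on all of $(0,T^*_{x_0})$ for some fixed $v\in\partial B_1(0)$ and some real-analytic scalar $\alpha$.} The ``if'' direction is immediate. For ``only if'', the constancy $u^*_{x_0}\equiv v$ on an open $J$ together with $u^*_{x_0}=\phi/\|\phi\|$ on $J\setminus Z$ yields $\phi_i(t)v_j=\phi_j(t)v_i$ for all $1\le i,j\le m$ on the open subinterval $J\setminus Z$. Each $\phi_i$ is an entire quasi-polynomial, so this bilinear identity propagates to all of $(0,T^*_{x_0})$ by the classical uniqueness principle for real-analytic functions; thus $\phi=\alpha v$ with $\alpha=\langle\phi,v\rangle$.

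With this dichotomy the four alternatives fall out. If $\phi=\alpha v$ globally, then $u^*_{x_0}(t)=\mbox{sign}(\alpha(t))v$ off the finite zero set of $\alpha$, and part~(ii) of Theorem~\ref{main-theorem} forces $\alpha$ to change sign at every element of $\mathbb{S}_{x_0}$; this gives $(\textbf{A}_1)$ when $\mathbb{S}_{x_0}=\emptyset$ and $(\textbf{A}_3)$ otherwise. If instead no such $v$ exists, the dichotomy excludes local constancy on every subinterval; this gives $(\textbf{A}_2)$ when $\mathbb{S}_{x_0}=\emptyset$ and $(\textbf{A}_4)$ otherwise. Mutual exclusiveness of the four cases is built into their structural distinctions (continuous vs.\ piecewise-continuous, constant-valued vs.\ nowhere-locally-constant). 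The main obstacle will be the analytic-continuation step in the key dichotomy: one must ensure that $J\setminus Z$ contains a genuine open interval (which is why finiteness of $Z$ matters) and that the preliminary reduction to the controllable subsystem legitimately rules out $\phi\equiv 0$, so the zero-set principle for real-analytic functions can be applied.
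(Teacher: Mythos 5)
Your proposal is correct and follows essentially the same route as the paper: your ``key dichotomy'' is precisely the paper's Theorem \ref{yu-lemma-6-9-1} (local constancy of $u^*_{x_0}$ forces the switching function $\phi$ to be globally collinear with the fixed direction $v$, obtained by analytically continuing a collinearity identity off the finite nodal set), and the subsequent four-way case split according to whether $\mathbb{S}_{x_0}$ is empty and whether such a $v$ exists matches the paper's Steps 1--4. The only cosmetic difference is that you encode collinearity as $\phi_i v_j=\phi_j v_i$ while the paper uses the vanishing of $f(t)=\phi(t)-\langle v,\phi(t)\rangle v$.
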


  About Theorem \ref{yu-theorem-9-19-1}, it deserves mentioning that in Subsection \ref{yu-section-5}, we show, by several examples, that
    all cases $(\textbf{A}_i)$, $i=1,\dots, 4$,  may happen.

\vskip 5pt
    The third main result concerns   $\mathbb{D}_{x_0}$.
   \begin{theorem}\label{theorem4.4,10-3}
    Suppose that $(\textbf{H})_{x_0}$ holds. Then the following statements are true:

\begin{description}
  \item[$(i)$] If $n\geq 2$ and $\mathbb{S}_{x_0}\neq \emptyset$, then
    \begin{equation}\label{yu-9-27-12}
    \mathbb{D}_{x_0}\subset \partial B_1(0)\bigcap\left[\bigcup_{j=1}^{\left[{n}/{2}\right]}
    (B^\top (A^\top)^{2j-1}\mathcal{H}_{2j-1})\right],
\end{equation}
where
$[s]\triangleq\max\{j\in\mathbb{N}:j\leq s\}$ and
\begin{equation}\label{yu-9-27-11}
\mathcal{H}_j\triangleq\mbox{span}\{B,AB,\ldots,A^{j}B\}\bigcap
    \mbox{span}\{B,AB,\ldots,A^{j-1}B\}^\bot,\;j=1,\dots,n-1.
\end{equation}
  \item[$(ii)$] If $n=1$, then $\mathbb{S}_{x_0}=\emptyset$.
\end{description}
    \end{theorem}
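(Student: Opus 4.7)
The plan is to exploit the Pontryagin maximum principle mentioned in $(\textbf{b}_1)$ (see Proposition \ref{pontryagin-maximum-principle}), which provides a real-analytic, not identically zero function
$$\varphi(t)\triangleq B^\top e^{A^\top(T^*_{x_0}-t)}\eta,\qquad \eta\in\mathbb R^n\setminus\{0\},$$
such that $u^*_{x_0}(t)=\varphi(t)/\|\varphi(t)\|_{\mathbb R^m}$ whenever $\varphi(t)\neq 0$. Since $u^*_{x_0}$ is continuous at any point with $\varphi\neq 0$, every $\hat t\in\mathbb S_{x_0}$ must satisfy $\varphi(\hat t)=0$. This immediately yields part $(ii)$: when $n=1$, $e^{A^\top(T^*_{x_0}-t)}\eta$ is a nonzero scalar and, since $B\neq 0$, $\varphi(t)\neq 0$ for all $t$, so $\mathbb S_{x_0}=\emptyset$.

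For part $(i)$, fix $\hat t\in\mathbb S_{x_0}$ and set $\eta_{\hat t}\triangleq e^{A^\top(T^*_{x_0}-\hat t)}\eta$, so that $\varphi^{(k)}(\hat t)=(-1)^k B^\top(A^\top)^k\eta_{\hat t}$. Let $k_0\geq 1$ be the order of the zero of $\varphi$ at $\hat t$. Cayley--Hamilton forces $k_0\leq n-1$, since otherwise $B^\top(A^\top)^i\eta_{\hat t}=0$ for $i=0,\ldots,n-1$ would propagate to all $i\geq 0$, contradicting $\varphi\not\equiv 0$. A Taylor expansion at $\hat t$ then yields
$$\lim_{t\to\hat t^\pm}u^*_{x_0}(t)=(\pm 1)^{k_0}\,\varphi^{(k_0)}(\hat t)/\|\varphi^{(k_0)}(\hat t)\|_{\mathbb R^m}.$$
Because these two limits disagree (confirmed independently by Theorem \ref{main-theorem}$(ii)$), $k_0$ must be odd; writing $k_0=2j-1$ places $j$ in $\{1,\ldots,[n/2]\}$, which is precisely the index range in \eqref{yu-9-27-12}.

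It then remains to show the switching direction lies in $B^\top(A^\top)^{2j-1}\mathcal H_{2j-1}$, and not merely in $B^\top(A^\top)^{2j-1}\mathbb R^n$. Set $W_k\triangleq\mbox{span}\{B,AB,\ldots,A^k B\}$. The conditions $B^\top(A^\top)^i\eta_{\hat t}=0$ for $i=0,\ldots,k_0-1$ are equivalent to $\eta_{\hat t}\in W_{k_0-1}^\bot$. Decompose orthogonally $\eta_{\hat t}=\alpha+\beta$ with $\alpha\in W_{k_0}$ and $\beta\in W_{k_0}^\bot$. Since $W_{k_0}^\bot\subset W_{k_0-1}^\bot$, the vector $\alpha=\eta_{\hat t}-\beta$ also lies in $W_{k_0-1}^\bot$, giving $\alpha\in W_{k_0}\cap W_{k_0-1}^\bot=\mathcal H_{k_0}$. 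Moreover, for every $u\in\mathbb R^m$ one has $\langle B^\top(A^\top)^{k_0}\beta,u\rangle_{\mathbb R^m}=\langle\beta,A^{k_0}Bu\rangle_{\mathbb R^n}=0$ because $A^{k_0}Bu\in W_{k_0}$; hence $B^\top(A^\top)^{k_0}\beta=0$. Therefore $\varphi^{(k_0)}(\hat t)$ is a nonzero scalar multiple of $B^\top(A^\top)^{2j-1}\alpha$ with $\alpha\in\mathcal H_{2j-1}$, and normalising produces the desired inclusion \eqref{yu-9-27-12}.

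The main obstacle is the trimming step in the last paragraph: the vanishing conditions yield only $\eta_{\hat t}\in W_{k_0-1}^\bot$, which is strictly larger than $\mathcal H_{k_0}$; the key observation is that the $W_{k_0}^\bot$-component of $\eta_{\hat t}$ is annihilated by $B^\top(A^\top)^{k_0}$, so replacing $\eta_{\hat t}$ by its $\mathcal H_{k_0}$-component does not alter the switching direction. Once this trimming is in place, the remaining work is bookkeeping of Taylor coefficients and a parity argument at $\hat t$.
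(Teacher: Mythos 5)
Your proposal is correct and follows essentially the same route as the paper: the same multiplier $z^*$ from the maximum principle, the same order-of-vanishing integer (your $k_0$ is the paper's $k(\hat t;\hat z^*)$), Cayley--Hamilton for the bound $k_0\le n-1$, a Taylor-expansion parity argument for oddness, and the identical orthogonal decomposition (your $\alpha+\beta$ is the paper's $f_1+f_2$) with the observation that $B^\top(A^\top)^{k_0}$ annihilates the $W_{k_0}^\bot$-component. The argument for $(ii)$ also matches the paper's.
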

        Several notes on Theorem \ref{theorem4.4,10-3} are given as follows:
\begin{enumerate}

  \item[($\textbf{d}_1$)] By Theorem \ref{theorem4.4,10-3},
   in the case that $n\geq 2$, $\mbox{dim}[\mathcal{H}_{2j-1}]=0$ (for each $1\leq j\leq \left[\frac{n}{2}\right]$) and $(\textbf{H})_{x_0}$ holds, we have
  $\mathbb{S}_{x_0}=\emptyset$ (i.e., the optimal control is continuous over $(0,T^*_{x_0})$).
     In particular, when $\mbox{rank}(B)=n$, we have $\mathbb{S}_{x_0}=\emptyset$.

  \item[($\textbf{d}_2$)] We call a unit vector in $\mathcal{H}_j$ (with $1\leq j\leq (n-1)$) as  {\it a new direction of order $j$}, which is in the subspace:  $\mbox{span}\{B,AB,\ldots, A^jB\}$, and orthogonal to the subspace: $\mbox{span}\{B,AB,\ldots, A^{j-1}B\}$.
  From Theorem \ref{theorem4.4,10-3}, we see that $\mathbb{D}_{x_0}$ can only contain
  new directions of order $(2j-1)$ through the transformation $B^\top (A^\top)^{2j-1}$, with
    $1\leq j\leq \left[\frac{n}{2}\right]$.

  \item[($\textbf{d}_3$)] In Section \ref{appendix-9}, we further prove what follows
  (see Theorem \ref{yu-proposition-10-16-1} in Section \ref{appendix-9}):
  \begin{itemize}
   \item  In the case that $\mbox{rank}(B)=1$,
  $\mathbb{D}_{x_0}$ contains at most two vectors:
  $\frac{B^\top v}{\|B^\top v\|_{\mathbb{R}^m}}$ and $-\frac{B^\top v}{\|B^\top v\|_{\mathbb{R}^m}}$,
  where $v$ is a unit vector in  the $1$-dim space: $\mbox{ker}(B^\top)^\bot$.
  \item In the case that  $\mbox{rank}(B,AB,\ldots,A^{n-1}B)=n$ and $\mbox{rank}(B)=n-1$,  $\mathbb{D}_{x_0}$ contains at most two vectors:
  $\frac{B^\top A^\top v}{\|B^\top A^\top v\|_{\mathbb{R}^m}}$ and $-\frac{B^\top A^\top v}{\|B^\top A^\top v\|_{\mathbb{R}^m}}$, where $v$ is a unit vector in the $1$-dim space: $\mbox{ker}(B^\top)$.
  \end{itemize}

\end{enumerate}

\noindent{\bf Plan of the paper.} Section \ref{section2,11-3} presents proofs of the main theorems;
Section \ref{example} gives some examples;  Section \ref{appendix-9}
studies further information on  switching directions; Section \ref{appendix,11-3} (Appendix) provides some
results used in our studies.

\section{Proofs of main results}\label{section2,11-3}

\subsection{Proof of Theorem \ref{main-theorem}}
\par
  The next Lemma \ref{main-theorem-lemma1} plays an important role in the study of the upper bound for the number of switching points.
  The key to prove Lemma \ref{main-theorem-lemma1} is
    \cite[Theorem 2.2]{QSL2}.
\begin{lemma}\label{main-theorem-lemma1}
Let $\mbox{d}_A$ and $\mbox{q}_{A,B}$ be given by (\ref{A-spectrum-distance}) and (\ref{yu-9-13-b-2}) respectively. Then for each
\begin{equation*}\label{main-theorem-lemma1-2}
    z\in \mbox{span}\{B,AB,\ldots,A^{n-1}B\}\setminus\{0\},
\end{equation*}
     the following nodal set:
\begin{eqnarray}\label{main-theorem-lemma1-1}
 \mathcal O_z  \triangleq \{t\in\mathbb{R}^+ :  B^\top e^{A^\top t} z=0 \}
\end{eqnarray}
has at most $(\mbox{q}_{A,B}-1)$ elements over each open interval $I$ with  $|I|\leq d_A$.

\end{lemma}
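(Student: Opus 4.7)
The plan is to reduce the vector-valued nodal-set problem for $B^\top e^{A^\top t}z$ to counting zeros of a single scalar function that satisfies a constant-coefficient linear ODE whose characteristic roots lie in $\sigma(A)$, and then to invoke \cite[Theorem~2.2]{QSL2}.

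First I would show that $t \mapsto B^\top e^{A^\top t} z$ is not identically zero. Otherwise, differentiating repeatedly at $t=0$ would give $B^\top (A^\top)^k z = 0$ for every $k\in\mathbb{N}$, which by Cayley--Hamilton forces $z$ to be orthogonal to $\mbox{span}\{B, AB, \ldots, A^{n-1}B\}$; combined with $z$ lying in this span and $z\neq 0$, this is a contradiction. Writing the columns of $B$ as $b_1,\dots,b_m$ and setting $f_i(t):= b_i^\top e^{A^\top t} z$, I then pick some $i_0$ with $f_{i_0}\not\equiv 0$.

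Next I would derive a scalar ODE for $f_{i_0}$. Put $d := \mbox{dim}\,\mathcal{V}_A(b_{i_0})$, so that $d\leq \mbox{q}_{A,B}$ by definition of $\mbox{q}_{A,B}$. Then $\{b_{i_0}, Ab_{i_0}, \dots, A^{d-1}b_{i_0}\}$ is a basis of $\mathcal{V}_A(b_{i_0})$, so there exist $\alpha_0, \dots, \alpha_{d-1}\in\mathbb{R}$ with $A^d b_{i_0} = \sum_{k=0}^{d-1} \alpha_k A^k b_{i_0}$. Transposing this identity and multiplying on the right by $e^{A^\top t}z$, and observing that $f_{i_0}^{(k)}(t) = b_{i_0}^\top (A^\top)^k e^{A^\top t} z$, I obtain
\begin{equation*}
f_{i_0}^{(d)}(t) \;=\; \sum_{k=0}^{d-1} \alpha_k \, f_{i_0}^{(k)}(t),\qquad t\in\mathbb{R}^+.
\end{equation*}
The associated polynomial $\lambda^d - \sum_{k=0}^{d-1} \alpha_k \lambda^k$ is the minimal annihilator of $b_{i_0}$ with respect to $A$, hence divides the minimal polynomial of $A$; its roots therefore form a subset of $\sigma(A)$.

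Finally I would apply \cite[Theorem~2.2]{QSL2} to the nontrivial solution $f_{i_0}$: its zero set in any open interval whose length is at most $\min\{\pi/|\mbox{Im}\,\lambda|: \lambda\;\mbox{a characteristic root}\}$ contains at most $d-1$ points. Since these characteristic roots sit inside $\sigma(A)$, the admissible interval length is bounded below by $d_A$, and since $d\leq \mbox{q}_{A,B}$, we conclude that $f_{i_0}$ has at most $\mbox{q}_{A,B}-1$ zeros on the given $I$. Because $\mathcal{O}_z \subset \{t: f_{i_0}(t)=0\}$, the same bound transfers to $\sharp(\mathcal{O}_z\cap I)$. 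The only delicate point I anticipate is checking that \cite[Theorem~2.2]{QSL2} handles the boundary case $|I|=d_A$ on an open interval correctly; the linear-algebra reduction and ODE derivation are otherwise routine.
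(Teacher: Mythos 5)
Your argument is correct, but it reaches the conclusion by a genuinely different route than the paper. The paper argues by contradiction: assuming $q_{A,B}$ zeros $t_1<\cdots<t_{q_{A,B}}$ in $I$, it observes that $z$ would be orthogonal to $\mbox{span}\{e^{At_1}B,\ldots,e^{At_{q_{A,B}}}B\}$, making this span a \emph{proper} subspace of $\mbox{span}\{B,AB,\ldots,A^{n-1}B\}$, and this contradicts the span-equality statement of \cite[Theorem 2.2]{QSL2} (which applies because $t_{q_{A,B}}-t_1<\mbox{d}_A$). You instead work directly: you pick one column $b_{i_0}$ with $f_{i_0}=b_{i_0}^\top e^{A^\top\cdot}z\not\equiv 0$, show $f_{i_0}$ solves a constant-coefficient ODE of order $d=\mbox{dim}\,\mathcal{V}_A(b_{i_0})\le \mbox{q}_{A,B}$ whose characteristic roots lie in $\sigma(A)$, and count zeros via a P\'olya-type disconjugacy bound. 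Both arguments are sound; yours is more self-contained (it essentially re-derives the mechanism behind the cited theorem rather than using it as a black box) and even yields the slightly sharper bound $\min\{\mbox{dim}\,\mathcal{V}_A(b_i)-1: f_i\not\equiv 0\}$. One caveat: the statement you attribute to \cite[Theorem~2.2]{QSL2} --- that a nontrivial solution of a $d$-th order constant-coefficient scalar ODE has at most $d-1$ zeros on an open interval of length at most $\min\{\pi/|\mbox{Im}\,\mu|\}$ over its characteristic roots --- is not the span-equality statement that theorem actually provides as it is invoked in this paper; you are really appealing to the classical disconjugacy theorem for such equations (P\'olya). That result is true and handles the boundary case $|I|=\mbox{d}_A$ for \emph{open} intervals exactly as you anticipate (any $d$ zeros inside the open $I$ span a compact interval of length strictly less than $\mbox{d}_A$), but you should cite it for what it is rather than as Theorem~2.2 of \cite{QSL2}.
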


\begin{proof}
Arbitrarily fix a vector $z\in\mbox{span}\{B,AB,\ldots,A^{n-1}B\}\setminus\{0\}$
and  an open interval $I\subset \mathbb R$
with  $|I|\leq \mbox{d}_A$. We only need to prove
\begin{equation}\label{main-theorem-lemma1-3}
  \sharp\;[\mathcal O_z\cap I] \leq (\mbox{q}_{A,B}-1).
\end{equation}
Suppose by contradiction that
$\sharp\;[\mathcal O_z\cap I] \geq \mbox{q}_{A,B}$.
Then by  (\ref{main-theorem-lemma1-1}), there is an strictly increasing sequence $\{t_j\}_{j=1}^{\mbox{q}_{A,B}}\subset I$ so that
$B^\top e^{A^\top t_j}z=0$ for each $j\in\{1,2,\ldots,\mbox{q}_{A,B}\}$.
This yields that for each $\{u_j\}_{j=1}^{\mbox{q}_{A,B}}\subset \mathbb R^m$,
\begin{equation*}
\left\langle\sum_{j=1}^{\mbox{q}_{A,B}}  e^{A t_j}Bu_j, z\right\rangle_{\mathbb R^m}
=\sum_{j=1}^{\mbox{q}_{A,B}} \langle u_j, B^\top e^{A^\top t_j} z\rangle_{\mathbb R^m}=0,
\end{equation*}
which leads to
\begin{equation}\label{yu-10-9-1}
z\notin \mbox{span} \left\{e^{At_1}B,\ldots,e^{At_{\mbox{q}_{A,B}}}B\right\}.
\end{equation}
    Meanwhile, by the Hamilton-Cayley theorem, we have
\begin{equation*}
    \mbox{span} \left\{e^{At_1}B,\ldots,e^{At_{\mbox{q}_{A,B}}}B\right\}\subset
    \mbox{span} \left\{B,AB,\ldots,A^{n-1}B\right\}.
\end{equation*}
    This, along with (\ref{yu-10-9-1}), yields
\begin{equation}\label{yu-10-9-2}
    \mbox{span} \left\{e^{At_1}B,\ldots,e^{At_{\mbox{q}_{A,B}}}B\right\}\Subset
    \mbox{span} \left\{B,AB,\ldots,A^{n-1}B\right\}.
\end{equation}
    On the other hand, since $t_{\mbox{q}_{A,B}}-t_1<\mbox{d}_A$, we can apply \cite[Theorem 2.2]{QSL2}
    to see
\begin{equation*}
    \mbox{span} \left\{e^{At_1}B,\ldots,e^{At_{\mbox{q}_{A,B}}}B\right\}=
    \mbox{span} \left\{B,AB,\ldots,A^{n-1}B\right\},
\end{equation*}
    which contradicts (\ref{yu-10-9-2}). So (\ref{main-theorem-lemma1-3}) is true. This ends the proof.
\end{proof}
 The  next Proposition \ref{pontryagin-maximum-principle} presents some properties of
   $(\mathcal{TP})_{x_0}$. They  seem to be well-known. However, we do not find
   a suitable reference (in particular, $(iii)$ with \eqref{2.1-9-29} in Proposition \ref{pontryagin-maximum-principle}). For the sake of the completeness of the paper, we will give its proof in Appendix
   (see Subsection \ref{appendix-2}).

\begin{proposition}\label{pontryagin-maximum-principle}
 Suppose that $(\textbf{H})_{x_0}$ holds. Then the following conclusions are true:
\begin{description}
  \item[$(i)$] It stands that $0<T^*_{x_0}<+\infty$.
  \item[$(ii)$] The problem $(\mathcal{TP})_{x_0}$ has an optimal control.
  \item[$(iii)$] There exists $z^*\in \mbox{span}\{B,AB,\ldots,A^{n-1}B\}$ with
  \begin{eqnarray}\label{2.1-9-29}
B^\top e^{A^\top \left(T^*_{x_0}-\cdot\right) } z^* \neq 0
\;\;\mbox{in}\;\;
\mathcal{C}((0,T^*_{x_0});\mathbb R^m),
\end{eqnarray}
   so that  any optimal control $u^*_{x_0}$ (to $(\mathcal{TP})_{x_0}$)  satisfies
\begin{eqnarray}\label{pontryagin-maximum-principle-1}
 \left\langle u^*_{x_0}(t),B^\top e^{ A^\top \left(T^*_{x_0}-t\right) } z^* \right\rangle_{\mathbb R^m}
 =\max_{v\in B_1(0)}   \left\langle v,B^\top e^{ A^\top \left(T^*_{x_0}-t\right) } z^* \right\rangle_{\mathbb R^m}\;
 \;\mbox{a.e.}\; t\in (0,T^*_{x_0}).
\end{eqnarray}
\item [$(iv)$] If  $u^*_{x_0}$ is an optimal control to $(\mathcal{TP})_{x_0}$, then, in $(0,T^*_{x_0})$, 
\begin{eqnarray}\label{main-theorem-lemma2-1}
u^*_{x_0}(t)=
\begin{cases}
    \frac{B^\top e^{A^\top(T^*_{x_0}- t)} z^*}{\|B^\top e^{A^\top(T^*_{x_0}- t) } z^* \|_{\mathbb R^m}},&t\in (0,T^*_{x_0})\setminus(T^*_{x_0}-\mathcal{O}_{z^*}),\\
    \lim_{s\to t^-}\frac{B^\top e^{A^\top(T^*_{x_0}- s)} z^*}{\|B^\top e^{A^\top(T^*_{x_0}- s) } z^* \|_{\mathbb R^m}},&t\in (T^*_{x_0}-\mathcal{O}_{z^*})\bigcap(0,T^*_{x_0}),
\end{cases}
\end{eqnarray}
    where $z^*$ is given in (iii) and $\mathcal{O}_{z^*}$ is given by (\ref{main-theorem-lemma1-1}) with $z=z^*$.

\item [$(v)$] Any optimal control $u^*_{x_0}$ (to $(\mathcal{TP})_{x_0}$) has the bang-bang property: $\|u^*_{x_0}(t)\|_{\mathbb{R}^m}=1$ for each $t\in (0,T^*_{x_0})$.

    \item [$(vi)$] The optimal control to $(\mathcal{TP})_{x_0}$ is unique.
\end{description}
\end{proposition}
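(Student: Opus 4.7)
The plan is to prove the six conclusions roughly in the listed order, with uniqueness $(vi)$ coming last and leveraging $(v)$. For $(i)$, $T^*_{x_0}>0$ follows because for any $u\in\mathcal{PC}(\mathbb{R}^+;B_1(0))$ the variation-of-constants formula combined with Gronwall gives $\|x(\hat t;x_0,u)-e^{A\hat t}x_0\|_{\mathbb{R}^n}\leq C\hat t$ on small $\hat t$, so any $\hat t$ with $x(\hat t;x_0,u)=0$ is bounded below by a positive constant depending only on $\|x_0\|_{\mathbb{R}^n}$ and $C$; the bound $T^*_{x_0}<+\infty$ is just $(\textbf{H})_{x_0}$. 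For $(ii)$, I would first pass to the $L^\infty$ problem via Lemma \ref{yu-lemma-10-11-1}, take a minimizing sequence of admissible $L^\infty$ controls, extend each by zero past its terminal time and truncate to $[0,T^*_{x_0}+1]$; this sequence is $L^\infty$-bounded and weakly-$*$ precompact, a subsequence converges to some $u^\star$ which is still $B_1(0)$-valued (the unit ball being weakly-$*$ closed), and linearity of the dynamics passes to the limit in the terminal constraint. This produces an $L^\infty$ optimal control, which will be upgraded to a $\mathcal{PC}$ one via the explicit representation to be obtained in $(iv)$.

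The main obstacle is $(iii)$. The reachable set $\mathcal{R}(T^*_{x_0})\triangleq\{x(T^*_{x_0};x_0,u):u\in L^\infty(0,T^*_{x_0};B_1(0))\}$ is compact, convex, and lies in $e^{AT^*_{x_0}}x_0+V$, where $V\triangleq\mbox{span}\{B,AB,\ldots,A^{n-1}B\}$; admissibility from $x_0$ forces $e^{AT^*_{x_0}}x_0\in V$ (since $V$ is $A$-invariant by Cayley--Hamilton), so $\mathcal{R}(T^*_{x_0})\subset V$. Minimality of $T^*_{x_0}$ rules out that $0$ is a relative interior point of $\mathcal{R}(T^*_{x_0})$ in $V$: if it were, a continuity argument on $\hat t\mapsto\mathcal{R}(\hat t)$ within $V$ would place $0\in\mathcal{R}(\hat t)$ for some $\hat t<T^*_{x_0}$, contradicting optimality. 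Hahn--Banach in $V$ then supplies a nonzero $z^*\in V$ separating $0$ from $\mathcal{R}(T^*_{x_0})$; rewriting the separation inequality through the variation-of-constants formula and saturating it with an optimal $u^*_{x_0}$ yields \eqref{pontryagin-maximum-principle-1}. The non-degeneracy \eqref{2.1-9-29} is the subtle point: the map $t\mapsto B^\top e^{A^\top(T^*_{x_0}-t)}z^*$ is real-analytic on $\mathbb{R}$, so if it vanished identically on $(0,T^*_{x_0})$ it would vanish on all of $\mathbb{R}$, forcing $z^*\perp e^{As}B\mathbb{R}^m$ for every $s$ and hence $z^*\perp V$; combined with $z^*\in V$, this gives $z^*=0$, contradicting nontriviality. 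It is precisely for this analyticity argument that the refinement $z^*\in V$ had to be extracted in the separation step.

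Parts $(iv)$--$(vi)$ are then soft consequences. Given \eqref{pontryagin-maximum-principle-1} and \eqref{2.1-9-29}, for each $t\in(0,T^*_{x_0})\setminus(T^*_{x_0}-\mathcal{O}_{z^*})$ the maximizer on the right-hand side of \eqref{pontryagin-maximum-principle-1} is the unique unit vector aligned with $B^\top e^{A^\top(T^*_{x_0}-t)}z^*$, which gives the first branch of \eqref{main-theorem-lemma2-1} almost everywhere. Lemma \ref{main-theorem-lemma1} (applied on a finite cover of $(0,T^*_{x_0})$ by intervals of length at most $\mbox{d}_A$) shows $\mathcal{O}_{z^*}\cap (0,T^*_{x_0})$ is finite, so I can redefine $u^*_{x_0}$ on this finite exceptional set by the prescribed left limits without altering its a.e.\ equivalence class, producing a $\mathcal{PC}$ left-continuous representative; this simultaneously completes $(ii)$ and establishes $(iv)$. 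Property $(v)$ is then immediate from \eqref{main-theorem-lemma2-1}. For $(vi)$, if $u^*_1$ and $u^*_2$ are both optimal, their average is admissible by convexity and reaches $0$ at time $T^*_{x_0}$ by linearity, hence is optimal; applying $(v)$ to this average and using strict convexity of $B_1(0)$ forces $u^*_1=u^*_2$ a.e.\ on $(0,T^*_{x_0})$, and $\mathcal{PC}$ left continuity promotes this to equality throughout $(0,T^*_{x_0}]$.
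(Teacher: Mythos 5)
Your proposal is correct in substance, and parts $(i)$, $(ii)$, $(iv)$--$(vi)$ follow essentially the same architecture as the paper (reduction to the $L^\infty$ problem, the representation formula from the maximum principle plus finiteness of $\mathcal{O}_{z^*}\cap(0,T^*_{x_0})$ via Lemma \ref{main-theorem-lemma1}, bang-bang from the formula, uniqueness from strict convexity of $B_1(0)$). Where you genuinely diverge is $(iii)$: the paper handles the possibly uncontrollable case through the Kalman controllability decomposition --- conjugating $(A,B)$ to block-triangular form, applying the controllable-case maximum principle of \cite[Theorem 47]{Sontag} to the reduced pair $(A_1,B_1)$, and lifting the multiplier back via $P^{-1}$, which is how it secures both $z^*\in\mbox{span}\{B,AB,\ldots,A^{n-1}B\}$ and the non-degeneracy \eqref{2.1-9-29}. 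You instead run the separation argument directly inside $V=\mbox{span}\{B,AB,\ldots,A^{n-1}B\}$, using $A$-invariance of $V$ to place the whole reachable set in $V$, supporting-hyperplane Hahn--Banach within $V$ to get $z^*\in V\setminus\{0\}$, and then analyticity plus $z^*\in V$ to rule out identical vanishing. Both routes work; yours is more self-contained (it avoids invoking the decomposition lemma and the cited controllable-case theorem), at the cost of having to justify two small points you currently gloss over. First, ``$0$ in the relative interior of $\mathcal{R}(T^*_{x_0})$ would give $0\in\mathcal{R}(\hat t)$ for $\hat t<T^*_{x_0}$'' does not follow from Hausdorff continuity alone; you need the standard convexity lemma that a closed convex set within Hausdorff distance $\delta<\varepsilon$ of a set containing $B_\varepsilon(0)\cap V$ must itself contain $0$ (separate and test against $-\varepsilon p$). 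Second, in $(iv)$ the claim is about \emph{every} optimal control of $(\mathcal{TP})_{x_0}$, which is by definition already a left-continuous $\mathcal{PC}$ function; so rather than ``redefining on a null set'' you should observe that a.e.\ agreement with the continuous first branch off the finite exceptional set, together with left continuity, forces equality everywhere, including the prescribed left limits at the exceptional points. Neither issue is a real gap, just missing routine detail.
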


\begin{remark}
      From  (\ref{main-theorem-lemma2-1}), we see that under  $(\textbf{H})_{x_0}$,
     $\mathbb{S}_{x_0}\subset (T^*_{x_0}-\mathcal{O}_{z^*})\cap(0,T^*_{x_0})$.
     It deserves mentioning that $\mathbb{S}_{x_0}\neq(T^*_{x_0}-\mathcal{O}_{z^*})\cap(0,T^*_{x_0})$ in general.
      This can be seen from the example given in Subsection \ref{subsection-11-8-1}.
\end{remark}

With the aid of Proposition \ref{pontryagin-maximum-principle} and Lemma \ref{main-theorem-lemma1}, we are ready to prove
Theorem \ref{main-theorem}.

\begin{proof}[Proof of Theorem \ref{main-theorem}]
Let $z^*$ be given by $(iii)$ of Proposition \ref{pontryagin-maximum-principle}, $\mathcal O_{z^*}$ be given by (\ref{main-theorem-lemma1-1}) with $z=z^*$. By Proposition \ref{pontryagin-maximum-principle}, let $u^*_{x_0}$  be the optimal control to $(\mathcal{TP})_{x_0}$.

 We first  prove  $(i)$ in Theorem \ref{main-theorem}. Let $I\subset(0,T^*_{x_0})$ be an open interval, with
 $|I|\leq\mbox{d}_A$. Write
 \begin{eqnarray*}
 \mathbb{S}_{x_0}(I)\triangleq  \mathbb{S}_{x_0}\cap I.
 \end{eqnarray*}
 Then it follows by  \eqref{1.7-9-28-b}, \eqref{main-theorem-lemma1-1}
 and \eqref{main-theorem-lemma2-1} that
\begin{equation}\label{main-theorem-2}
  \mathbb{S}_{x_0}(I) \subset (T^*_{x_0}-\mathcal O_{z^*})\cap I.
\end{equation}
(\emph{Notice that \eqref{main-theorem-2} holds for any open interval $I$.})
Meanwhile,
because $z^*\in \mbox{span} \{B,AB,\ldots,A^{n-1}B\}\setminus\{0\}$  and
$|T^*_{x_0}-I|\leq \mbox{d}_A$, we can apply Lemma \ref{main-theorem-lemma1}
(with $z=z^*$) to find that
\begin{equation*}
  \sharp[(T^*_{x_0}-\mathcal O_{z^*}) \cap I]=\sharp[\mathcal O_{z^*}\cap (T^*_{x_0}-I)] \leq (\mbox{q}_{A,B}-1).
  \end{equation*}
This, along with (\ref{main-theorem-2}), yields
$\sharp\mathbb{S}_{x_0}(I) \leq \mbox{q}_{A,B}-1$.
Hence,  $u^*_{x_0}$ has at most $(\mbox{q}_{A,B}-1)$ switching points over $I$, which leads to the conclusion $(i)$ in Theorem \ref{main-theorem}.

We next show the conclusion $(ii)$ in Theorem \ref{main-theorem}.
Arbitrarily fix $\hat t\in \mathbb{S}_{x_0}$. By \eqref{main-theorem-lemma2-1},
we see that $\hat t\in (T^*_{x_0}-\mathcal{O}_{z^*})\cap (0,T^*_{x_0})$, which contains at most finitely many elements (see Lemma \ref{main-theorem-lemma1}). Thus,  it follows from $(iv)$ in Proposition  \ref{pontryagin-maximum-principle}
that there is $\hat \varepsilon \in (0,\hat t)$ so that
\begin{equation}\label{main-theorem-3}
  u^*_{x_0}(t)=
 \frac{ B^\top e^{ A^\top (T^*_{x_0}- t) } z^*}{\| B^\top e^{ A^\top (T^*_{x_0}- t) } z^*\|_{\mathbb R^m}}
  \;\;\mbox{for each}\;\; t\in(\hat t-\hat\varepsilon, \hat t+\hat\varepsilon)\setminus \{\hat{t}\}.
\end{equation}

 Because the function: $\mathbb R\ni t\mapsto B^\top e^{A^\top (T^*_{x_0}- t)} z^*\in\mathbb{R}^n$, is real analytic
and is not identically zero (see (\ref{2.1-9-29})),  there exists $j\in\mathbb N$ and $a_j\in\mathbb R^m\setminus\{0\}$ so that when $t$ is near $\hat t$,
\begin{equation*}
B^\top e^{ A^\top (T^*_{x_0}- t)} z^* = a_j(t-\hat t)^j+b_j(t)(t-\hat t)^{j+1}
= (t-\hat t)^j\left[a_j +b_j(t)(t-\hat t)\right],
\end{equation*}
    where $b_j(\cdot):\mathbb{R}^+\to\mathbb{R}^m$ is bounded and real analytical in $(\hat t-\varepsilon, \hat t+\varepsilon)$
    for each $\varepsilon\in (0,\hat \varepsilon]$.
This, together with (\ref{main-theorem-3}), leads to  the following two conclusions:

\begin{description}
  \item[(a)] When $j$ is even, $\lim_{t\rightarrow \hat t^+}u^*_{x_0}(t)=\lim_{t\rightarrow \hat t^-}u^*_{x_0}(t)
             =\frac{a_j}{\|a_j\|_{\mathbb R^m}}$;
   \item[(b)] When $j$ is odd,  $\lim_{t\rightarrow \hat t^+}u^*_{x_0}(t) = -\lim_{t\rightarrow \hat t^-}u^*_{x_0}(t)
               =\frac{a_j}{\|a_j\|_{\mathbb R^m}}$.
 \end{description}

Finally,  since $\hat t$ is a switching point of $u^*_{x_0}$, it follows by Definition \ref{Switching point} that
\begin{equation*}
  \lim_{t\rightarrow \hat t^+}u^*_{x_0}(t)\neq\lim_{t\rightarrow \hat t^-}u^*_{x_0}(t),
\end{equation*}
which, along with the above conclusions $(a)$ and $(b)$, leads to that $(a)$ is impossible since $\hat{t}$ is a switching point. Thus, by $(b)$, we obtain \eqref{main-theorem-1}. This ends the proof of
Theorem \ref{main-theorem}.
\end{proof}

\subsection{Proof of Theorem \ref{yu-theorem-9-19-1}}

The key to prove Theorem \ref{yu-theorem-9-19-1} is the following theorem:
\begin{theorem}\label{yu-lemma-6-9-1}
Suppose that $(\textbf{H})_{x_0}$ holds.
Let $u^*_{x_0}$ be the  optimal control to $(\mathcal{TP})_{x_0}$. If there is a vector $v\in\partial B_1(0)$
and a subinterval $(a,b)\subset (0,T^*_{x_0})$ (with $a<b$) so that $u^*_{x_0}(t)=v$ for all $t\in (a,b)$, then
when $t\in (0,T^*_{x_0})$, $u^*_{x_0}(t)$ is  either $v$ or $-v$.
\end{theorem}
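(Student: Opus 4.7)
The plan is to exploit the real analyticity of the costate expression appearing in part $(iv)$ of Proposition~\ref{pontryagin-maximum-principle}, and reduce the claim to an analytic continuation argument. Throughout, let $z^*$ be the vector provided by Proposition~\ref{pontryagin-maximum-principle}$(iii)$, set
\begin{equation*}
f(t)\triangleq B^\top e^{A^\top(T^*_{x_0}-t)}z^*,\qquad t\in\mathbb{R},
\end{equation*}
and let $\mathcal{O}_{z^*}$ be as in \eqref{main-theorem-lemma1-1} with $z=z^*$, so that by Lemma~\ref{main-theorem-lemma1} the set $(T^*_{x_0}-\mathcal{O}_{z^*})\cap(0,T^*_{x_0})$ is finite.

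First I would use the explicit formula \eqref{main-theorem-lemma2-1} on the interval $(a,b)$: away from the finite set $T^*_{x_0}-\mathcal{O}_{z^*}$, one has $u^*_{x_0}(t)=f(t)/\|f(t)\|_{\mathbb{R}^m}=v$, hence there is a positive scalar $\lambda(t)>0$ with $f(t)=\lambda(t)v$ on a subinterval $(a',b')\subset(a,b)$ on which $f$ does not vanish. Pick any $w\in\mathbb{R}^m$ with $\langle w,v\rangle_{\mathbb{R}^m}=0$ and define
\begin{equation*}
g_w(t)\triangleq\langle w,f(t)\rangle_{\mathbb{R}^m}=\langle w,B^\top e^{A^\top(T^*_{x_0}-t)}z^*\rangle_{\mathbb{R}^m}.
\end{equation*}
Then $g_w$ is real analytic on $\mathbb{R}$ and vanishes on $(a',b')$; by the identity principle, $g_w\equiv 0$ on $\mathbb{R}$. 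Since $w$ was an arbitrary vector orthogonal to $v$, this shows that $f(t)$ is parallel to $v$ for every $t\in\mathbb{R}$.

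To conclude, fix any $t\in(0,T^*_{x_0})$. If $t\notin T^*_{x_0}-\mathcal{O}_{z^*}$, then $f(t)\neq 0$ and parallel to $v$, so $u^*_{x_0}(t)=f(t)/\|f(t)\|_{\mathbb{R}^m}\in\{v,-v\}$ by \eqref{main-theorem-lemma2-1}. If $t\in T^*_{x_0}-\mathcal{O}_{z^*}$, then \eqref{main-theorem-lemma2-1} gives $u^*_{x_0}(t)=\lim_{s\to t^-}f(s)/\|f(s)\|_{\mathbb{R}^m}$; this limit exists by the local Taylor expansion argument used in the proof of Theorem~\ref{main-theorem} (writing $f(s)=(s-t)^j[a_j+O(s-t)]$ with $a_j\neq 0$), and since $f(s)$ is parallel to $v$ for every $s$, so is $a_j$, so the limit still lies in $\{v,-v\}$.

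The only possible obstacle is a careful handling of the a.e.\ equality $u^*_{x_0}=v$ on $(a,b)$ versus the pointwise description supplied by \eqref{main-theorem-lemma2-1}: one needs to know that $u^*_{x_0}$ is genuinely equal to $f/\|f\|$ off a finite set, which is exactly the content of Proposition~\ref{pontryagin-maximum-principle}$(iv)$, so this is not an actual obstruction but rather the step that has to be invoked explicitly. Everything else is an application of analytic continuation, and the $\mbox{span}\{B,AB,\dots,A^{n-1}B\}$ framework plays no role in this particular argument.
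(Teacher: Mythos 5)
Your proposal is correct and follows essentially the same route as the paper: both arguments use the pointwise formula of Proposition~\ref{pontryagin-maximum-principle}$(iv)$ on a subinterval of $(a,b)$ avoiding the finite exceptional set, deduce that $B^\top e^{A^\top(T^*_{x_0}-t)}z^*$ lies in $\mathrm{span}\{v\}$ there, and propagate this to all of $\mathbb{R}$ by real analyticity (the paper does this via the single analytic function $t\mapsto B^\top e^{A^\top(T^*_{x_0}-t)}z^*-\langle v,B^\top e^{A^\top(T^*_{x_0}-t)}z^*\rangle_{\mathbb{R}^m}v$, which is just the projection onto $v^\perp$ that you test componentwise against each $w\perp v$). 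The handling of the exceptional points via the leading Taylor coefficient is likewise equivalent to the paper's left-limit argument, so there is nothing to fix.
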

\begin{proof}
    Let $z^*\in\mbox{span}\{B,AB,\ldots,A^{n-1}B\}\setminus\{0\}$ be given by $(iii)$ in Proposition \ref{pontryagin-maximum-principle}.
    By Lemma  \ref{main-theorem-lemma1}, $\mathcal{O}_{z^*}\cap(0,T^*_{x_0})$ contains at most finitely many elements.
    So we can assume, without loss of generality, that
    \begin{eqnarray}\label{2.11,10-19}
    (a,b)\bigcap (T^*_{x_0}-\mathcal{O}_{z^*})=\emptyset.
    \end{eqnarray}
    (For otherwise, we can take $(a_1,b_1)\subset (a,b)$ (with $a_1<b_2$) so that $ (a_1,b_1)\cap (T^*_{x_0}-\mathcal{O}_{z^*})=\emptyset$.)
    Several facts are given in order. First, by the assumption of Theorem \ref{yu-lemma-6-9-1}, we have
    \begin{equation}\label{yu-6-22-2}
    u^*_{x_0}(t)=v\;\;\mbox{for each}\;t\in(a,b).
\end{equation}
    Second, it follows by (\ref{main-theorem-lemma2-1}) and \eqref{2.11,10-19} that
    \begin{eqnarray}\label{2.13,10-19}
     u^*_{x_0}(t)=\frac{B^\top e^{A^{\top}(T^*_{x_0}-t)}z^*}{\|B^\top e^{A^{\top}(T^*_{x_0}-t)}z^*\|_{\mathbb{R}^m}}
     \;\;\mbox{for each}\;t\in(a,b).
    \end{eqnarray}
    Third, from \eqref{yu-6-22-2} and \eqref{2.13,10-19}, we find
\begin{eqnarray*}
        B^\top e^{A^{\top}(T^*_{x_0}-t)}z^*
        =
\|B^\top e^{A^{\top}(T^*_{x_0}-t)}z^*\|_{\mathbb{R}^m}v\;\;\mbox{for each}\;t\in(a,b).
\end{eqnarray*}
       Multiplying the above by  $v$ and noting that $v\in \partial B_1(0)$, we see
\begin{equation}\label{yu-6-22-3}
    \langle v,B^\top e^{A^{\top}(T^*_{x_0}-t)}z^*\rangle_{\mathbb{R}^m}
    =\|B^\top e^{A^{\top}(T^*_{x_0}-t)}z^*\|_{\mathbb{R}^m}
    \;\;\mbox{for any}\;t\in(a,b).
\end{equation}

    We now define a function $f(\cdot): \mathbb{R}\rightarrow\mathbb{R}^m$ by
\begin{equation}\label{yu-6-22-4}
    f(t)\triangleq B^\top e^{A^{\top}(T^*_{x_0}-t)}z^*-\langle v,B^\top e^{A^{\top}(T^*_{x_0}-t)}z^*\rangle_{\mathbb{R}^m} v,
    \; t\in\mathbb{R}.
\end{equation}
By \eqref{yu-6-22-4}, \eqref{yu-6-22-3}, \eqref{yu-6-22-2} and \eqref{2.13,10-19}, we obtain that for each $t\in (a,b)$,
\begin{eqnarray}\label{2.16,10-19}
f(t)
&=& B^\top e^{A^{\top}(T^*_{x_0}-t)}z^*-\|B^\top e^{A^{\top}(T^*_{x_0}-t)}z^*\|_{\mathbb{R}^m}u^*_{x_0}(t)\nonumber\\
&=& B^\top e^{A^{\top}(T^*_{x_0}-t)}z^*-B^\top e^{A^{\top}(T^*_{x_0}-t)}z^*=0.
\end{eqnarray}
Meanwhile,  from \eqref{yu-6-22-4}, we see that $f(\cdot)$ is real analytic over $\mathbb{R}$. This, along with
\eqref{2.16,10-19} and \eqref{yu-6-22-4}, yields
\begin{equation}\label{yu-6-22-6}
   B^\top e^{A^{\top}(T^*_{x_0}-t)}z^*=\langle v,B^\top e^{A^{\top}(T^*_{x_0}-t)}z^*\rangle_{\mathbb{R}^m} v
    \;\;\mbox{for any}\;t\in\mathbb{R}.
\end{equation}
   Multiplying the above by $B^\top e^{A^{\top}(T^*_{x_0}-t)}z^*$, we get
   \begin{eqnarray*}
   \|B^\top e^{A^{\top}(T^*_{x_0}-t)}z^*\|_{\mathbb{R}^m}^2=\langle v, B^\top e^{A^{\top}(T^*_{x_0}-t)}z^*\rangle_{\mathbb{R}^m}^2
    \;\;\mbox{for each}\;t\in(0,T^*_{x_0}),
   \end{eqnarray*}
    which yields that for each $t\in(0,T^*_{x_0})$
    \begin{equation}\label{yu-6-22-7}
    \langle v, B^\top e^{A^{\top}(T^*_{x_0}-t)}z^*\rangle_{\mathbb{R}^m}
    =\pm
    \|B^\top e^{A^{\top}(T^*_{x_0}-t)}z^*\|_{\mathbb{R}^m}.
       \end{equation}

    Finally, we show that for arbitrarily fixed $t\in (0,T^*_{x_0})$,
    \begin{eqnarray}\label{wang2.19}
    u^*_{x_0}(t)=\pm v.
    \end{eqnarray}
    Indeed, if  $t\in (0,T^*_{x_0})\setminus(T^*_{x_0}-\mathcal{O}_{z^*})$, then we get from
    (\ref{main-theorem-lemma2-1}), \eqref{yu-6-22-6} and \eqref{yu-6-22-7} that
    \begin{eqnarray}\label{2.19,10-19}
    u^*_{x_0}(t)=\pm \frac{\|B^\top e^{A^{\top}(T^*_{x_0}-t)}z^*\|_{\mathbb{R}^m} v}{\|B^\top e^{A^{\top}(T^*_{x_0}-t)}z^*\|_{\mathbb{R}^m}},
    \end{eqnarray}
    which leads to \eqref{wang2.19} for the case that $t\in(0,T^*_{x_0})\setminus(T^*_{x_0}-\mathcal{O}_{z^*})$.
        We next consider the case that $t\in(T^*_{x_0}-\mathcal{O}_{z^*})\cap(0,T^*_{x_0})$. By (\ref{main-theorem-lemma2-1}), we have
    \begin{eqnarray}\label{2.20,10-19}
    u^*_{x_0}(t)=\lim_{s\to t^-}\frac{B^\top e^{ A^\top(T^*_{x_0}- s)} z^*}{\|B^\top e^{ A^\top(T^*_{x_0}- s) } z^* \|_{\mathbb R^m}}.
    \end{eqnarray}
    Two observations are given in order. First, since $(T^*_{x_0}-\mathcal{O}_{z^*})\cap (0,T^*_{x_0})$ contains at most finitely many elements, for each $t\in (T^*_{x_0}-\mathcal{O}_{z^*})\cap (0,T^*_{x_0})$, we can find $\delta\in (0,t)$ so that $(\delta,t)\subset  (0,T^*_{x_0})\setminus(T^*_{x_0}-\mathcal{O}_{z^*})$. Consequently,
        $u^*_{x_0}$ is continuous over $(\delta, t)$.
        Second,  we can use the same way in the proof of \eqref{2.19,10-19} to verify that  for each $s\in (\delta,t)$, $u^*_{x_0}(s)=\pm v$.
       From these observations, we find
    either $u^*_{x_0}(s)=v$ for all $s\in (\delta,t)$ or $u^*_{x_0}(s)=-v$ for all $s\in (\delta,t)$.
    This, along with
     \eqref{2.20,10-19}, leads to \eqref{wang2.19} for the case when $t\in(T^*_{x_0}-\mathcal{O}_{z^*})\cap(0,T^*_{x_0})$.

    Thus, we complete the proof of Theorem \ref{yu-lemma-6-9-1}.
\end{proof}
\begin{proof}[The proof of Theorem \ref{yu-theorem-9-19-1}]
    Let $u^*_{x_0}$ be the optimal control to $(\mathcal{TP})_{x_0}$.
    First of all, by $(iv)$ of Proposition  \ref{pontryagin-maximum-principle}, we have
    $\|u^*_{x_0}(t)\|_{\mathbb{R}^m}=1$ for all $t\in(0,T^*_{x_0})$, i.e., $u^*_{x_0}(t)\in\partial B_1(0)$ for all $t\in(0,T^*_{x_0})$. The rest of the proof is divided into four steps.
\vskip 5pt
  \noindent {\it Step 1. We show that if $\mathbb{S}_{x_0}=\emptyset$ and $u^*_{x_0}$ is constant-valued in $(0,T^*_{x_0})$, then $(\textbf{A}_1)$ is true.}

 Since $u^*_{x_0}(t)\in \partial B_1(0)$ for all $t\in (0,T^*_{x_0})$, the conclusion is obvious.

\vskip 5pt
  \noindent{\it Step 2. We show that if $\mathbb{S}_{x_0}=\emptyset$ and $u^*_{x_0}$ is not constant-valued in $(0,T^*_{x_0})$, then $(\textbf{A}_2)$ is true.}

    By contradiction, we suppose that $(\textbf{A}_2)$ is not true in this case. Then there is a  subinterval
    $(a,b)\subset(0,T^*_{x_0})$ and $v\in\partial B_1(0)$ so that
\begin{equation}\label{yu-10-19-1-b}
    u^*_{x_0}(t)=v\;\;\mbox{for all}\;t\in(a,b).
\end{equation}
   From \eqref{yu-10-19-1-b}, we can use  Theorem \ref{yu-lemma-6-9-1} to find that
    for each $t\in(0,T^*_{x_0})$,
   \begin{eqnarray}\label{2.23,10-21}
   u^*_{x_0}(t)=\pm v.
   \end{eqnarray}
   Meanwhile, since $\mathbb{S}_{x_0}=\emptyset$, we have that $u^*_{x_0}$ is continuous over $(0,T^*_{x_0})$.
   This, along with \eqref{2.23,10-21} and \eqref{yu-10-19-1-b}, shows that $u^*_{x_0}(t)=v$
   for all $t\in (0,T^*_{x_0})$, which leads to a contradiction. Hence,  $(\textbf{A}_2)$ is true.
\vskip 5pt
    \noindent {\it Step 3. We show that if $\mathbb{S}_{x_0}\neq \emptyset$ and $u^*_{x_0}$ is a step  function  taking only two values  (on $\partial B_1(0)$) which have
   opposite directions, then $(\textbf{A}_3)$ is true.}

   In this case, $(\textbf{A}_3)$ follows from  Theorem \ref{main-theorem} at once.

\vskip 5pt
 \noindent{\it Step 4. We prove that if $\mathbb{S}_{x_0}\neq \emptyset$ and $u^*_{x_0}$ is not a step  function,  then $(\textbf{A}_4)$ is true.}
\par
   By contradiction, we suppose that $(\textbf{A}_4)$ is not true in this case. Then
    there is a subinterval
    $(a,b)\subset (0,T^*_{x_0})$ and $v\in\partial B_1(0)$ such that  (\ref{yu-10-19-1-b}) holds.
    Then by Theorem \ref{yu-lemma-6-9-1}, we have \eqref{2.23,10-21}.
    Since $\mathbb{S}_{x_0}\neq \emptyset$ and $u^*_{x_0}$
   is  piece-wise continuous over $(0,T^*_{x_0})$, we see from \eqref{2.23,10-21} that $u^*_{x_0}$
   is  a step function, which leads to a contradiction. So $(\textbf{A}_4)$ is true.

\par
    Thus, we end the proof of Theorem \ref{yu-theorem-9-19-1}.

\subsection{The proof of Theorem \ref{theorem4.4,10-3}}
   We first show the conclusion $(i)$.
   Because of $(\textbf{H})_{x_0}$, all conclusions in Proposition \ref{pontryagin-maximum-principle} hold. Let $u^*_{x_0}$ be the  optimal control to $(\mathcal{TP})_{x_0}$.
Let  $z^*\in\mathbb{R}^n\setminus\{0\}$ be given by  $(iii)$ of Proposition \ref{pontryagin-maximum-principle}.
    Set
\begin{eqnarray}\label{new2.24,10-22}
 \hat{z}^*\triangleq e^{A^\top T^*_{x_0}}z^*.
 \end{eqnarray}
       Arbitrarily fix $\hat{t}\in\mathbb{S}_{x_0}$. (Notice that we assumed $\mathbb{S}_{x_0}\neq \emptyset$.)
    and
     then define
    \begin{equation}\label{yu-10-19-3-b}
    k(\hat{t};\hat{z}^*)\triangleq \min\{j\in\mathbb{N}:B^\top (A^\top)^je^{-A^\top \hat{t}}\hat{z}^*\neq 0\}.
\end{equation}
\par
The rest of the proof is organized by two steps.
 \vskip 5pt
   \noindent {\it Step 1. We prove that
   \begin{equation}\label{yu-10-19-3}
    k(\hat{t};\hat{z}^*)\leq (n-1)
\end{equation}
    and
\begin{equation}\label{yu-10-27-3}
    k(\hat{t};\hat{z}^*)\;\;\mbox{is an odd number}.
\end{equation}}
\par
    We start with proving (\ref{yu-10-19-3}).
       By contradiction, we suppose that  $k(\hat{t};\hat{z}^*)>(n-1)$.
       Then by \eqref{yu-10-19-3-b}, we have that
      $B^\top (A^\top)^je^{-A^\top \hat{t}}\hat{z}^*=0$ for $j=0,1,\ldots,n-1$. This, along with the Hamilton-Cayley theorem, yields that
   \begin{eqnarray}\label{2.36,10-25}
   B^\top (A^\top)^je^{-A^\top \hat{t}}\hat{z}^*=0\;\;\mbox{for each}\;\;j\in\mathbb{N}.
   \end{eqnarray}
  Since
\begin{equation}\label{yu-10-23-1}
    B^\top e^{-A^\top t}\hat{z}^*=\sum_{j=0}^\infty
    \frac{B^\top (-A^\top)^je^{-A^\top \hat{t}}\hat{z}^*}{j!}(t-\hat{t})^j\;\;\mbox{for any}\;\;t\in\mathbb{R}^+,
\end{equation}
  it follows from  \eqref{2.36,10-25} and (\ref{new2.24,10-22}) that
  \begin{eqnarray*}
  B^\top e^{A^\top(T^*_{x_0}-t)}z^*=B^\top e^{-A^\top t}\hat{z}^*=0\;\;\mbox{for all}\;\;t\in(0,T^*_{x_0}),
   \end{eqnarray*}
  which contradicts (\ref{2.1-9-29}) (in Proposition \ref{pontryagin-maximum-principle}). Thus (\ref{yu-10-19-3}) holds.

\par
    We next show  (\ref{yu-10-27-3}). Two observations are given in order. First,
  it follows by
(\ref{yu-10-23-1}) and  (\ref{yu-10-19-3-b}) that
\begin{eqnarray}\label{yu-10-19-4}
     B^\top e^{-A^\top t}\hat{z}^*
    =(t-\hat{t})^{k(\hat{t};\hat{z}^*)}\Big[a_{k(\hat{t};\hat{z}^*)}
     +b_{k(\hat{t};\hat{z}^*)}(t)(t-\hat{t})\Big]\;\; \mbox{for each}\;\; t\in \mathbb{R}^+,
\end{eqnarray}
    where
$$
    a_{k(\hat{t};\hat{z}^*)}\triangleq\frac{B^\top(-A^\top)^{k(\hat{t};\hat{z}^*)}
    e^{-A^\top\hat{t}}\hat{z}^*}{k(\hat{t};\hat{z}^*)!},
    \;\;\;\;b_{k(\hat{t};\hat{z}^*)}(t)\triangleq\sum^{+\infty}_{j= k(\hat{t};\hat{z}^*)+1}\frac{B^\top(-A^\top)^je^{-A^\top\hat{t}}\hat{z}^*}{j!}
    (t-\hat{t})^{j-k(\hat{t};\hat{z}^*)-1}.
$$
  Second, since $\hat{t}\in\mathbb{S}_{x_0}$, we see that  (\ref{main-theorem-3}) holds for some $\hat \varepsilon>0$.

  By contradiction, we suppose that $k(\hat{t};\hat{z}^*)$ is even. Then
  it follows by  (\ref{main-theorem-3}), (\ref{new2.24,10-22}) and (\ref{yu-10-19-4}) that
\begin{equation*}
    \lim_{t\to \hat{t}^+}u^*_{x_0}(t)=\lim_{t\to \hat{t}^-}u^*_{x_0}(t)
    =\frac{a_{k(\hat{t};\hat{z}^*)}}{\|a_{k(\hat{t};\hat{z}^*)}\|_{\mathbb{R}^m}},
\end{equation*}
   which contradicts to  the fact that $\hat{t}\in\mathbb{S}_{x_0}$. Therefore, $k(\hat{t};\hat{z}^*)$ is odd, i.e., (\ref{yu-10-27-3}) holds.
\vskip 5pt

\noindent \emph{Step 2. We  complete the proof.}

Write
\begin{equation}\label{yu-10-27-5}
    e^{-A^\top\hat{t}}\hat{z}^*=f_1+f_2,
\end{equation}
where
\begin{equation}\label{yu-10-24-1}
    f_1\in \mbox{span}\{B,AB,\ldots,A^{k(\hat{t};z)}B\}\;\;\mbox{and}\;\;f_2\in
    \mbox{span}\{B,AB,\ldots,A^{k(\hat{t};z)}B\}^\bot.
\end{equation}
\par
    We claim that
\begin{equation}\label{yu-10-19-10}
    f_1\in \mathcal{H}_{k(\hat{t};\hat{z}^*)}\setminus\{0\}\;\;\mbox{and}\;\;B^\top(A^\top)^{k(\hat{t};\hat{z}^*)}f_1\neq 0,
\end{equation}
    and
\begin{equation}\label{yu-10-27-4}
    \lim_{t\to \hat t^-}u^*_{x_0}(t)=\frac{B^\top(A^\top)^{k(\hat{t};\hat{z}^*)}f_1}
    {\|B^\top(A^\top)^{k(\hat{t};\hat{z}^*)}f_1\|_{\mathbb{R}^m}}.
\end{equation}
       When they are proved, we get from (\ref{yu-10-19-10})$_1$  (Here and in what follows, $(*)_j$ denotes the $j$-th equation  in
$(*)$.) and (\ref{yu-10-27-4}) that
$$
    \lim_{t\to \hat t^-}u^*_{x_0}(t)\in\partial B_1(0)\bigcap [B^\top (A^\top)^{k(\hat{t};\hat{z}^*)}\mathcal{H}_{k(\hat{t};\hat{z}^*)}].
$$
    This, along with (\ref{yu-10-27-3}) and (\ref{yu-10-19-3}), yields that
\begin{equation}\label{yu-10-27-1-b}
    \lim_{t\to \hat t^-}u^*_{x_0}(t)\in\partial B_1(0)\bigcap\left[\bigcup_{j=1}^{\left[{n}/{2}\right]}
    (B^\top (A^\top)^{2j-1}\mathcal{H}_{2j-1})\right].
\end{equation}
   Since $\hat t$ was arbitrarily taken from $\mathbb{S}_{x_0}$, we get (\ref{yu-9-27-12}) from
        (\ref{yu-9-27-10}) and (\ref{yu-10-27-1-b}) at once.
\par
    We now  prove (\ref{yu-10-19-10}). By  (\ref{yu-10-19-3-b}), we have
\begin{equation}\label{yu-10-19-5}
    B^\top (A^\top)^{k(\hat{t};\hat{z}^*)}e^{-A^\top \hat{t}}\hat{z}^*\neq 0\;\mbox{and}\;B^\top (A^\top)^je^{-A^\top \hat{t}}\hat{z}^*=0\;\;\mbox{for all}\;j=0,1,\ldots,k(\hat{t};\hat{z}^*)-1.
\end{equation}
   Meanwhile, we have
\begin{equation}\label{yu-9-28-5}
    \bigcap_{j=0}^{p}\mbox{ker}(B^\top(A^\top)^j)
    =\mbox{span}\{B,AB,\ldots,A^pB\}^\bot\;\;\mbox{for each}\;p=0,1,\ldots,n-1.
\end{equation}
    Then by (\ref{yu-10-27-5}), (\ref{yu-10-24-1})$_2$ and (\ref{yu-9-28-5}), we see
\begin{equation}\label{yu-10-19-7}
    B^\top (A^\top)^{j}e^{-A^\top\hat{t}}\hat{z}^*=B^\top (A^\top)^{j}f_1\;\;\mbox{for all}\;j=0,1,\ldots,k(\hat{t};\hat{z}^*).
\end{equation}
    This, along with (\ref{yu-10-19-5})$_2$, yields that
    $f_1\in \left[\bigcap_{j=0}^{k(\hat{t};\hat{z}^*)-1}\mbox{ker}(B^\top(A^\top)^j)\right]\setminus\{0\}$.
    From this and  (\ref{yu-9-28-5}), we obtain
\begin{equation*}\label{yu-10-27-6}
    f_1\in \mbox{span}\{B,AB,\ldots,A^{k(\hat{t};\hat{z}^*)-1}B\}^\bot\setminus\{0\},
\end{equation*}
   which, along with (\ref{yu-10-19-5}), (\ref{yu-10-24-1})$_1$ and  (\ref{yu-9-27-11}), leads to (\ref{yu-10-19-10})$_1$.
    By (\ref{yu-10-19-5})$_1$ and (\ref{yu-10-19-7}), we obtain (\ref{yu-10-19-10})$_2$ at once.

\par
    Next, we show (\ref{yu-10-27-4}). Since $\hat{t}\in\mathbb{S}_{x_0}$, it follows from \eqref{main-theorem-2}
    that $\hat t\in  (T^*_{x_0}-\mathcal{O}_{z^*})\cap (0,T^*_{x_0})$. From this, (\ref{main-theorem-lemma2-1}), (\ref{yu-10-19-4}) and
    (\ref{yu-10-27-3}), we get
\begin{equation*}\label{yu-10-27-9}
    \lim_{t\to \hat t^-}u^*_{x_0}(t)=
    \lim_{t\to \hat t^-}\frac{B^\top e^{-A^\top t}\hat{z}^*}
    {\|B^\top e^{-A^\top t}\hat{z}^*\|_{\mathbb{R}^m}}=\frac{B^\top(A^\top)^{k(\hat{t};\hat{z}^*)}e^{-A^\top\hat{t}}\hat{z}^*}
    {\|B^\top(A^\top)^{k(\hat{t};\hat{z}^*)}e^{-A^\top\hat{t}}\hat{z}^*\|_{\mathbb{R}^m}}.
\end{equation*}
    This, along with (\ref{yu-10-19-7}), yields (\ref{yu-10-27-4}). So we have proved the conclusion $(i)$ of Theorem \ref{theorem4.4,10-3}.
    \vskip 5pt

   We now show the conclusion $(ii)$.  In the case that $n=1$,  we have  $\mbox{dim}[\mbox{ker}(B^\top)]=0$, since  $B\neq 0$.
   From this, we can use $(iii)$ and $(iv)$ in Proposition \ref{pontryagin-maximum-principle} to
   see that $\mathbb{S}_{x_0}=\emptyset$.

   Thus, we complete the proof of Theorem \ref{theorem4.4,10-3}.
\end{proof}

\section{Examples}\label{example}

\subsection{Some examples on Theorem \ref{yu-theorem-9-19-1}}\label{yu-section-5}
    \textbf{Example on $(\textbf{A}_1)$.}
      {\it We will construct a time optimal control problem $(\mathcal{TP})_{x_0}$ so that its optimal control holds $(\textbf{A}_1)$.}
      This problem is given in the manner: $n=m\in\mathbb{N}^+$ is arbitrarily given;  $x_0\in\mathbb{R}^n\backslash\{0\}$ is arbitrarily given;
    $A\triangleq 0_{n\times n}$ (which is in $\mathbb{R}^{n\times n}$ whose all elements are zero) and $B\triangleq\mathbb{I}_{n\times n}$ (which is the identity matrix in $\mathbb{R}^{n\times n}$).
    First of all, one can easily see that  $\mbox{rank}(B,AB,\cdots,A^{n-1}B)=n$ and $\sigma(A)=\{0\}$.
     Then, according to notes $(\textbf{b}_3)$, $(\textbf{b}_2)$ and $(\textbf{b}_1)$ in Section \ref{introduce},
    this problem  has a unique optimal control $u^*_{x_0}$.

We now claim   that the optimal time and the optimal control to this problem are as:
\begin{equation}\label{yu-10-22-5}
    T^*_{x_0}=\|x_0\|_{\mathbb{R}^n},\;\;u^*_{x_0}(t)=
\begin{cases}
    -\frac{x_0}{\|x_0\|_{\mathbb{R}^n}},&t\in(0,T^*_{x_0}],\\
    0,&t\in (T^*_{x_0},+\infty).
\end{cases}
\end{equation}
    Indeed, since $0=x(T^*_{x_0};x_0,u^*_{x_0})=x_0+\int_0^{T^*_{x_0}}u^*_{x_0}(t)dt$, we have
    \begin{eqnarray}\label{3.1,10-26}
    \|x_0\|_{\mathbb{R}^n}\leq \int^{T^*_{x_0}}_0\|u^*_{x_0}(t)\|_{\mathbb{R}^n}dt
    \leq \int^{T^*_{x_0}}_0dt
    =T^*_{x_0}.
    \end{eqnarray}
    Meanwhile,  if let
$$
  T\triangleq \|x_0\|_{\mathbb{R}^n}\;\;\mbox{and}\;\;
    \hat u(t)\triangleq
\begin{cases}
    -\frac{x_0}{\|x_0\|_{\mathbb{R}^n}},&t\in(0,T],\\
    0,&t\in(T,+\infty),
\end{cases}
$$
    then we have
    $x(T;x_0,\hat u)=0$, i.e., $\hat u$ is an admissible control and $T\geq T^*_{x_0}$. These, along with \eqref{3.1,10-26},
    lead to $T=T^*_{x_0}$ and $\hat u$ is an optimal control. Then, by the uniqueness of the optimal control, we obtain
    \eqref{yu-10-22-5}.

    Finally, by \eqref{yu-10-22-5}, we see that the optimal control in this problem holds $(\textbf{A}_1)$ in Theorem \ref{yu-theorem-9-19-1}.

\vskip 5pt
    \noindent\textbf{Example on $(\textbf{A}_2)$.}
   {\it We will construct a time optimal control problem $(\mathcal{TP})_{x_0}$ so that its optimal control holds $(\textbf{A}_2)$.}
    This problem is given in the manner:  $n=m=2$; $x_0\triangleq(x_{0,1},x_{0,2})^\top\in\mathbb{R}^2\backslash\{0\}$ is arbitrarily given;
\begin{equation*}
    A\triangleq\left(
               \begin{array}{cc}
                 0 & 1 \\
                 -1 & 0 \\
               \end{array}
             \right)\;\;\mbox{and}\;\;B\triangleq\mathbb{I}_{2\times2}.
\end{equation*}
One can directly check that $\mbox{rank}(B,AB)=2$ and $\sigma(A)=\{i,-i\}$.  Then, according to notes $(\textbf{b}_3)$, $(\textbf{b}_2)$ and $(\textbf{b}_1)$ in Section \ref{introduce},
    this problem  has a unique optimal control $u^*_{x_0}$.
Also, one can easily verify
\begin{equation}\label{yu-9-2-1-bb}
    e^{At}=\left(
                   \begin{array}{cc}
                     \cos t & \sin t \\
                     -\sin t & \cos t \\
                   \end{array}
                 \right),\;\;t\in \mathbb{R}^+.
\end{equation}
By the same way to show \eqref{3.1,10-26}, we can use \eqref{yu-9-2-1-bb} to get
\begin{equation}\label{yu-10-3-50}
    T^*_{x_0}\geq \|x_0\|_{\mathbb{R}^2}.
\end{equation}

 We now let
    \begin{equation}\label{3.5,10-26}
    \widehat{T} \triangleq \|x_0\|_{\mathbb{R}^2}\;\;\mbox{and}\;\;\hat u(t)\triangleq
\begin{cases}
    -\widehat{T}^{-1}e^{At}x_0,&t\in(0,\widehat{T}],\\
    0,&t\in(\widehat{T},+\infty).
\end{cases}
\end{equation}
   By  \eqref{3.5,10-26} and \eqref{yu-9-2-1-bb}, we have
\begin{equation}\label{yu-9-2-1}
    \hat u(t)
    =
\begin{cases}
    -\widehat{T}^{-1}\left(
                          \begin{array}{cc}
                            \cos t & \sin t \\
                            -\sin  & \cos t \\
                          \end{array}
                        \right)
                        \left(
                                 \begin{array}{c}
                                   x_{0,1} \\
                                   x_{0,2} \\
                                 \end{array}
                               \right),&t\in(0,\widehat{T}],\\
    0,&t\in(\widehat{T},+\infty).
\end{cases}
\end{equation}
  From \eqref{yu-9-2-1}, we find that  $\hat u\in\mathcal{PC}(\mathbb{R}^+;B_1(0))$  and that
$\int_0^{\widehat{T}}e^{-At}\hat u(t)dt=-x_0$, i.e.,
   $x(\widehat{T};x_0,\hat u)=0$. Thus, $\hat u$ is an admissible control
   and    $\widehat T\geq T^*_{x_0}$.
    These,   along with \eqref{3.5,10-26}$_1$ and \eqref{yu-10-3-50}, yields that
     $\widehat T=T^*_{x_0}$ and $\hat u$ is an optimal control. Then by the uniqueness of the optimal control to this problem,
     we see that $u^*_{x_0}=\hat u$. From this and \eqref{yu-9-2-1}, we  see that the optimal control in this problem holds $(\textbf{A}_2)$ in Theorem \ref{yu-theorem-9-19-1}.

\vskip 10pt
    \noindent\textbf{Example on $(\textbf{A}_3)$.}
    {\it We will construct a time optimal control problem $(\mathcal{TP})_{x_0}$ so that its optimal control holds $(\textbf{A}_3)$.}
    This problem is given in the manner: $n=3$ and $m=2$; $x_0\triangleq(0,x_{0,2},x_{0,3})^\top\in\mathbb{R}^3$ with
\begin{equation}\label{yu-10-28-2}
    x_{0,2}^2+x_{0,3}^2\geq 4\pi^2;
\end{equation}
\begin{equation}\label{yu-10-28-7}
    A\triangleq\left(
         \begin{array}{ccc}
           -1 & 0 & 0 \\
           0 & 0 & 1 \\
           0 & -1 & 0 \\
         \end{array}
       \right),\;\;
       B\triangleq\left(
           \begin{array}{cc}
              1 & 0 \\
              0 & 0 \\
              0 & 1 \\
           \end{array}
         \right).
\end{equation}
    One can directly check that $\mbox{rank}(B,AB,A^2B)=3$ and
    $\sigma(A)=\{-1,i,-i\}$. Thus,
    according to notes $(\textbf{b}_3)$, $(\textbf{b}_2)$ and $(\textbf{b}_1)$ in Section \ref{introduce}, this problem has a unique optimal control $u^*_{x_0}$.
\par

    {\it We will show that $u^*_{x_0}$ holds $(\textbf{A}_3)$ in Theorem \ref{yu-theorem-9-19-1}.}
    This will be done by several steps.
\vskip 5pt
    \noindent{\it Step 1. We define a new time optimal control, denoted by   $(\widehat{\mathcal{TP}})_{\hat{x}_0}$.}

    It is defined
         in the following manner: $n\triangleq\hat{n}=2$ and $m\triangleq\hat{m}=1$; $x_0\triangleq \hat{x}_0\triangleq (x_{0,2},x_{0,3})^\top$;
    \begin{equation}\label{yu-10-28-9}
    \hat{A}\triangleq\left(
       \begin{array}{cc}
         0 & 1 \\
         -1 & 0 \\
       \end{array}
     \right),\;\;\hat{B}\triangleq\left(
       \begin{array}{c}
         0 \\
         1 \\
       \end{array}
     \right).
\end{equation}
  From  \eqref{yu-10-28-9}, we can see that
   $\mbox{rank}(\hat{B},\hat{A}\hat{B})=2$ and $\sigma(\hat{A})=\{i,-i\}$. These, along with
   notes $(\textbf{b}_3)$, $(\textbf{b}_2)$ and $(\textbf{b}_1)$ in Section \ref{introduce},
    show that
   $(\widehat{\mathcal{TP}})_{\hat{x}_0}$
    has a unique optimal control $\hat{u}^*_{\hat{x}_0}$.
     From  \eqref{yu-10-28-9}, {\it we also see that $e^{\hat{A}t}$ has the expression (\ref{yu-9-2-1-bb}).}

\vskip 5pt
    \noindent{\it Step 2. We prove that $\hat{u}^*_{\hat{x}_0}$ is a non-constant valued step function taking only two values: $1$ and $-1$.}

    Write $\widehat{T}^*_{\hat{x}_0}$ for the optimal time of $(\widehat{\mathcal{TP}})_{\hat{x}_0}$.
    Denote by $\widehat{\mathbb{S}}_{\hat{x}_0}$ the set of all switching points of  $\hat{u}^*_{\hat{x}_0}$
    over $(0,\widehat{T}^*_{\hat{x}_0})$.
    Let $\hat{z}^*\triangleq(\hat{z}^*_1,\hat{z}^*_2)^{\top}\in\mathbb{R}^2\setminus\{0\}$  be given by the conclusion $(iii)$ of  Proposition \ref{pontryagin-maximum-principle}
    (where $(\mathcal{TP})_{x_0}$ is replaced by $(\widehat{\mathcal{TP}})_{\hat{x}_0}$).

    First, by \eqref{yu-10-28-9} and  (\ref{yu-9-2-1-bb}) (where $A=\hat A$), we have
    \begin{eqnarray}\label{3.10,10-29}
    \hat{B}^\top e^{\hat{A}^\top(\widehat{T}^*_{\hat{x}_0}-t)}\hat{z}^*
    =\|\hat{z}^*\|_{\mathbb{R}^2}\sin(\widehat{T}^*_{\hat{x}_0}+\theta-t),\;\;t\in \mathbb{R}^+,
    \end{eqnarray}
    where $\theta\triangleq\arctan (\hat{z}^*_2/\hat{z}^*_1)$. (Here, we agree that $\arctan(\infty)=\pi/2$ and $\arctan(-\infty)=-\pi/2$.)
   Second, by the same way to show \eqref{yu-10-3-50}, we can use (\ref{yu-9-2-1-bb}) (where $A=\hat A$) and (\ref{yu-10-28-2})
   to see
   \begin{equation}\label{yu-10-28-1}
    \widehat{T}^*_{\hat{x}_0}\geq \|\hat{x}_0\|_{\mathbb{R}^2}\geq 2\pi.
\end{equation}
    Then by  $(iv)$ in Proposition \ref{pontryagin-maximum-principle} and \eqref{3.10,10-29}, we find
   \begin{eqnarray}\label{yu-10-28-3}
    \hat{u}^*_{\hat{x}_0}(t)
    =
\begin{cases}
    \frac{\sin(\widehat{T}^*_{\hat{x}_0}+\theta-t)}{|\sin(\widehat{T}^*_{\hat{x}_0}+\theta-t)|},&
    t\in(0,\widehat{T}^*_{\hat{x}_0})\setminus \widehat{\mathcal{O}}_{\hat{z}^*},\\
    \lim_{s\to t^-}\frac{\sin(\widehat{T}^*_{\hat{x}_0}+\theta-s)}{|\sin(\widehat{T}^*_{\hat{x}_0}+\theta-s)|},
    &t\in\widehat{\mathcal{O}}_{\hat{z}^*},
\end{cases}
\end{eqnarray}
    where $\widehat{\mathcal{O}}_{\hat{z}^*}\triangleq\{t\in(0,\widehat{T}^*_{\hat{x}_0}):
    \sin(\widehat{T}^*_{\hat{x}_0}+\theta-t)=0\}$.
    Finally, by \eqref{yu-10-28-3} and the property of the function: $\sin t$ ($t\in \mathbb{R}$), yields
   \begin{equation}\label{3.13,10-29}
    \widehat{\mathbb{S}}_{\hat{x}_0}=\widehat{\mathcal{O}}_{\hat{z}^*}
    =\{\widehat{T}^*_{\hat{x}_0}+\theta-n\pi:n\in\mathbb{N}
    \;\;\mbox{s.t.}\;\;n\pi\in(\theta,\widehat{T}^*_{\hat{x}_0}+\theta)\},
\end{equation}
   which, along with (\ref{yu-10-28-1}), leads to  $\widehat{\mathbb{S}}_{\hat{x}_0}\neq\emptyset$.
   This, together with (\ref{yu-10-28-3}) and the first equality in \eqref{3.13,10-29}, shows the conclusion in {\it Step 2}.
\vskip 5pt
   \noindent{\it Step 3. We prove $u^*_{x_0}=(0,\hat{u}^*_{\hat{x}_0})^{\top}$.}
\par

     Write $x(\cdot;x_0,u)$  for the solution to the equation:
     $x'(t)=Ax(t)+Bu(t), \;t\geq 0$; $x(0)=x_0$. Write $\hat x(\cdot;\hat x_0,\hat u)$ for the solution to the equation:
     $\hat x'(t)=\hat A\hat x(t)+\hat B\hat u(t), \;t\geq 0$; $\hat x(0)=\hat x_0$. Then, from \eqref{yu-10-28-7}
     and \eqref{yu-10-28-9}, using the fact that $x_0=(0,\hat x_0^{\top})^{\top}$, one can easily find for each $u=(u_1,u_2)^{\top}$,
     \begin{eqnarray*}
     x(t;x_0,(u_1,u_2)^{\top})=(\tilde{x}(t;u_1),\hat x(t;\hat x_0, u_2))^\top\;\;\mbox{for all}\;\;t\geq 0,
     \end{eqnarray*}
    where $\tilde{x}(t;u_1)=\int_0^te^{-(t-s)}u_1(s)ds$. From this and the optimality of $u^*_{x_0}$ and $\hat{u}^*_{\hat{x}_0}$, as well as the uniqueness of the optimal control to $(\mathcal{TP})_{x_0}$,  we can directly verify that
     $u^*_{x_0}=(0,\hat{u}^*_{\hat{x}_0})^{\top}$.

     \vskip 5pt

   \noindent{\it Step 4. We prove that $u^*_{x_0}$  holds $(\textbf{A}_3)$ in Theorem \ref{yu-theorem-9-19-1}.}

   Indeed, by  conclusions in \emph{Steps 2-3}, we see that $u^*_{x_0}$ is a  non-constant valued
    step function taking only two values: $(0,1)^\top$ and $(0,-1)^\top$.

\vskip 5pt
    \noindent\textbf{Example on $(\textbf{A}_4)$.}
    {\it We will construct a time optimal control problem $(\mathcal{TP})_{x_0}$ so that its optimal control holds $(\textbf{A}_4)$.}
       First of all, we let $\xi\in\mathbb{R}\setminus\{0\}$ be fixed so that
\begin{equation}\label{yu-9-22-53}
    |\xi|\int_0^{4\pi}|\sin t|\sqrt{4\cos^2t+1}dt<1.
\end{equation}
    Then there is a unique time
    $\overline{T}>0$ so that
\begin{equation}\label{yu-9-22-54}
    |\xi|\int_0^{\overline{T}}|\sin t|\sqrt{4\cos^2t+1}dt=1.
\end{equation}
   This, along with  (\ref{yu-9-22-53}), yields that
\begin{equation}\label{yu-10-18-11}
    \overline{T}>4\pi.
\end{equation}
\par
  We now define the time optimal control problem $(\mathcal{TP})_{x_0}$  in the manner: $n=4$, $m=2$;
\begin{equation}\label{yu-9-22-50}
    A\triangleq\left(
                 \begin{array}{cccc}
                   0 & 1 & 0 & 0 \\
                   -1 & 0 & 0 & 0 \\
                   0 & 0 & 0 & 2 \\
                   0 & 0 & -2 & 0 \\
                 \end{array}
               \right),\;\;B\triangleq\left(
                                        \begin{array}{cc}
                                          0 & 0 \\
                                          1 & 0 \\
                                          0 & 0 \\
                                          0 & 1 \\
                                        \end{array}
                                      \right);
\end{equation}
    \begin{equation}\label{yu-9-22-55}
    x_0\triangleq\frac{\xi}{|\xi|}\int_0^{\overline{T}}
    \frac{\sin t}{|\sin t|\sqrt{4\cos^2t+1}}
    \left(
      \begin{array}{c}
        -\sin(2t) \\
        2\cos^2t \\
        -\sin(2t) \\
        \cos(2t) \\
      \end{array}
    \right)dt.
\end{equation}
   From \eqref{yu-9-22-50}, one has that
   \begin{equation}\label{yu-9-22-51}
    e^{At}=\left(
             \begin{array}{cccc}
               \cos t & \sin t & 0 & 0 \\
               -\sin t & \cos t & 0 & 0 \\
               0 & 0 & \cos(2t) & \sin(2t) \\
               0 & 0 & -\sin(2t) & \cos(2t) \\
             \end{array}
           \right),\;\;t\in\mathbb{R}
\end{equation}
   and that  $\mbox{rank}(B,AB,A^2B,A^3B)=4$ and $\sigma(A)=\{i,-i,2i,-2i\}$.
   The later two facts, along with the notes $(\textbf{b}_3)$, $(\textbf{b}_2)$ and $(\textbf{b}_1)$ in Section \ref{introduce}, yield that  $(\mathcal{TP})_{x_0}$ has a unique optimal control $u^*_{x_0}$, which implies
    \begin{equation}\label{yu-9-22-103}
    x_0=-\int_0^{T^*_{x_0}}e^{-At}Bu^*_{x_0}(t)dt.
\end{equation}
    Let
     $\hat{z}\triangleq(\xi,0,\xi,0)^\top \in\mathbb{R}^4$. Then by (\ref{yu-9-22-51}) and
     \eqref{yu-9-22-50}$_2$,  we have
\begin{equation}\label{yu-9-22-52}
    B^\top e^{-A^\top t}\hat{z}=-\xi\left(
                            \begin{array}{c}
                              \sin (2t) \\
                              \sin t \\
                            \end{array}
                          \right).
\end{equation}
    Define the following function:
   \begin{eqnarray}\label{yu-9-22-56}
    \bar{u}(t)\triangleq
    \begin{cases}
    -\frac{\sin t}{|\sin t|}\frac{\xi(2\cos t,1)^\top}{|\xi|\sqrt{4\cos ^2t+1}},&t\in(0,\overline{T})\setminus
    \{s\in\mathbb{R}^+:\sin s= 0\},\\
    -\lim_{s\to t^-}\frac{\sin s}{|\sin s|}\frac{\xi(2\cos t,1)^\top}{|\xi|\sqrt{4\cos ^2t+1}},&
    t\in\left[(0,\overline{T})\bigcap \{s\in\mathbb{R}^+:\sin s= 0\}\right]\bigcup\{\overline{T}\},\\
    0,&t\in(\overline{T},+\infty).
\end{cases}
\end{eqnarray}

\par

    We claim that
    \begin{equation}\label{yu-10-28-13}
T^*_{x_0}=\overline{T}\;\;\mbox{and}\;\;u^*_{x_0}=\bar{u}.
\end{equation}
    Indeed, by (\ref{yu-9-22-55}), (\ref{yu-9-22-51}) and (\ref{yu-9-22-56}),
    after some simple computations, we find
    \begin{equation}\label{yu-10-28-16}
    x_0=-\int_0^{\overline{T}}e^{-At}B\bar{u}(t)dt,\;\mbox{i.e.},\; x(\overline{T};x_0,\bar{u})=0,
\end{equation}
   which implies that $\overline{u}$  an admissible control and that $T^*_{x_0}\leq \overline{T}$.
    Meanwhile, by  (\ref{yu-9-22-56}) and (\ref{yu-9-22-52}), we have
\begin{equation*}
    \int_0^{\overline{T}}\langle \bar{u}(t),B^\top e^{-A^\top t}\hat{z}\rangle_{\mathbb{R}^2}dt=|\xi|\int_0^{\overline{T}}|\sin t|\sqrt{4\cos^2t+1}dt.
\end{equation*}
    This, together with (\ref{yu-10-28-16}) and (\ref{yu-9-22-54}), leads to
$\langle x_0,\hat{z}\rangle_{\mathbb{R}^4}=-1$,
which, along with  (\ref{yu-9-22-103}), implies
$$
    \int_0^{T^*_{x_0}}\|B^\top e^{-A^\top t}\hat{z}\|_{\mathbb{R}^2}dt\geq \int_0^{T^*_{x_0}}\langle u^*_{x_0}(t),B^\top e^{-A^\top t}\hat{z}\rangle_{\mathbb{R}^2}dt=-\langle x_0,\hat{z}\rangle_{\mathbb{R}^4}=1.
$$
   From the above,  (\ref{yu-9-22-54}) and the fact that $T^*_{x_0}\leq \overline{T}$, we obtain \eqref{yu-10-28-13}$_1$. Then by \eqref{yu-10-28-13}$_1$, \eqref{yu-10-28-16} and the uniqueness of the optimal control to $(\mathcal{TP})_{x_0}$, we get \eqref{yu-10-28-13}$_2$.
   Thus, we have proved \eqref{yu-10-28-13}.

   Next, by
  (\ref{yu-9-22-56}) and the property of the function: $\sin t$ $(t\in\mathbb{R})$,  we see that $\mathbb{S}_{x_0}=\{n\pi:n\pi\in(0,T^*_{x_0}),\;n\in\mathbb{N}\}$, which is not empty because of  (\ref{yu-10-18-11}).
   Meanwhile, by (\ref{yu-9-22-56}) and (\ref{yu-10-28-13})$_2$, we see that $u^*_{x_0}$ is not constant-valued in any subinterval of $(0,T^*_{x_0})$.
    From these,  we see that $u^*_{x_0}$ holds $(\textbf{A}_4)$ in Theorem \ref{yu-theorem-9-19-1}.

\subsection{An example on $\mathbb{S}_{x_0}\neq (T^*_{x_0}-\mathcal{O}_{z^*})\cap (0,T^*_{x_0})$}\label{subsection-11-8-1}
   We will construct a time optimal control problem  $(\mathcal{TP})_{x_0}$
   so that $\mathbb{S}_{x_0}$ is a proper subset of $(T^*_{x_0}-\mathcal{O}_{z^*})\cap (0,T^*_{x_0})$
   for some $z^*$ with \eqref{pontryagin-maximum-principle-1}. This problem is given in the manner:
   $n=4$ and $m=1$; $x_0\in\mathbb{R}^4\setminus\{0\}$ will be given later;
\begin{equation*}\label{yu-11-8-1}
    A\triangleq \left(
                  \begin{array}{cccc}
                    -1 & 0 & 1 & 1 \\
                    0 & -1 & 0 & 1 \\
                    0 & 0 & 0 & 1 \\
                    0 & 0 & -1 & 0 \\
                  \end{array}
                \right),\;\;B\triangleq\left(
                                         \begin{array}{c}
                                           0 \\
                                           0 \\
                                           0 \\
                                           1 \\
                                         \end{array}
                                       \right).
\end{equation*}
    One can directly check that
\begin{equation}\label{yu-11-8-1-b}
  \sigma(A)=\{-1,i,-i\}\;\;\mbox{and}\;\;  \mbox{rank}(B,AB,A^2B,A^3B)=4.
\end{equation}
  These, along with the notes $(\textbf{b}_3)$, $(\textbf{b}_2)$ and $(\textbf{b}_1)$ in Section \ref{introduce}, yields that for any $x_0\in\mathbb{R}^4\setminus\{0\}$, this problem has a unique optimal control.
   Also we can directly check that
    $\mbox{dim}\left[\mbox{span}\{B,AB,A^2B\}\bigcap \mbox{span}\{B,AB\}^\bot\right]=1$.
    Thus, we can take
\begin{equation}\label{yu-11-8-4}
    \hat{e}\in \mbox{span}\{B,AB,A^2B\}\bigcap \mbox{span}\{B,AB\}^\bot,\;\mbox{with}\; \|\hat{e}\|_{\mathbb{R}^4}=1.
\end{equation}
    We claim
\begin{equation}\label{yu-11-9-1}
    B\hat{e}=0,\;\;B^\top A^\top \hat{e}=0\;\;\mbox{and}\;\;B^\top (A^\top)^2\hat{e}\neq 0.
\end{equation}
    Indeed, by (\ref{yu-11-8-4}) and (\ref{yu-9-28-5}), we get (\ref{yu-11-9-1})$_1$ and (\ref{yu-11-9-1})$_2$ easily.
    Next, if $B^\top(A^\top)^2\hat{e}= 0$, then, we see from (\ref{yu-11-9-1})$_1$, (\ref{yu-11-9-1})$_2$ and (\ref{yu-9-28-5}) that
$$
    \hat{e}\in\bigcap_{j=0}^2\mbox{ker}(B^\top (A^\top)^j)= \mbox{span}\{B,AB,A^2B\}^\bot.
$$
    This, along with (\ref{yu-11-8-4}), yields $\hat{e}=0$, which leads to a contradiction.  Thus (\ref{yu-11-9-1})$_3$ holds.

     Arbitrarily fix $\hat t>0$. Let
\begin{equation}\label{yu-11-8-5}
    \hat{f}\triangleq e^{A^\top \hat{t}}\hat{e}.
\end{equation}
     Then by (\ref{yu-11-8-5}) and (\ref{yu-11-9-1}), we see
\begin{equation}\label{yu-11-8-6}
    B^\top e^{-A^\top \hat{t}}\hat{f}=0,\;\;B^\top A^{\top} e^{-A^\top \hat{t}}\hat{f}=0
    \;\;\mbox{and}\;\;B^\top (A^{\top})^2 e^{-A^\top \hat{t}}\hat{f}\neq 0.
\end{equation}
Take $\xi>0$ so that
\begin{equation}\label{yu-11-8-7}
    \xi\int_0^{\hat{t}}\|B^\top e^{-A^\top t}\hat{f}\|_{\mathbb{R}^1}dt<1.
\end{equation}
    By  (\ref{yu-11-8-1-b}), we can use
    \cite[pp.159 and pp.168]{Conti}  (or \cite[(2.10) in the proof of Theorem 2.6, pp.23]{Evans}) to find  a unique  $\overline{T}>\hat{t}$ so that
\begin{equation}\label{yu-11-8-8}
    \xi\int_0^{\overline{T}}\|B^\top e^{-A^\top t}\hat{f}\|_{\mathbb{R}^1}dt=1.
\end{equation}
    Let
    \begin{equation}\label{yu-11-8-9}
    \hat{z}\triangleq\xi e^{-A^\top \overline{T}}\hat{f},
\end{equation}
\begin{equation}\label{yu-11-8-10}
    \bar{u}(t)\triangleq
\begin{cases}
    \frac{B^\top e^{A^\top (\overline{T}-t)}\hat{z}}{\|B^\top e^{A^\top (\overline{T}-t)}\hat{z}\|_{\mathbb{R}^1}},&t\in(0,\overline{T})\setminus (\overline{T}-\mathcal{O}_{\hat{z}}),\\
    \lim_{s\to t}\frac{B^\top e^{A^\top (\overline{T}-s)}\hat{z}}{\|B^\top e^{A^\top (\overline{T}-s)}\hat{z}\|_{\mathbb{R}^1}},&t\in[(\overline{T}-\mathcal{O}_{\hat{z}})\bigcap(0,\overline{T})]
    \bigcup\{\overline{T}\},\\
    0,&t\in(\overline{T},+\infty)
\end{cases}
\end{equation}
    and
\begin{equation}\label{yu-11-8-11}
    x_0\triangleq-\int_0^{\overline{T}}e^{-At}B\bar{u}(t)dt,
\end{equation}
    where $\mathcal{O}_{\hat{z}}$ is given by (\ref{main-theorem-lemma1-1}) with $z=\hat{z}$. By \eqref{yu-11-8-11}, (\ref{yu-11-8-10}), (\ref{yu-11-8-9}) and (\ref{yu-11-8-8}), one can easily check that $\langle x_0,\hat{z}\rangle_{\mathbb{R}^4}=-1$ and then $x_0\neq 0$.

     Let  $u^*_{x_0}$ be the optimal control to $(\mathcal{TP})_{x_0}$ with $x_0$ given by \eqref{yu-11-8-11}. We claim
\begin{equation}\label{yu-11-8-26}
    \hat{z}\in\mathbb{R}^4\setminus\{0\}\;\;\mbox{is a multiplier so that (\ref{pontryagin-maximum-principle-1}) holds},
\end{equation}
     and
\begin{equation}\label{yu-11-8-25}
    \hat{t}\in\left[(T^*_{x_0}-\mathcal{O}_{\hat{z}})\bigcap (0,T^*_{x_0})\right]\setminus\mathbb{S}_{x_0}.
\end{equation}
    When these are proved,   the aim of this example is reached.
  The proofs of (\ref{yu-11-8-26}) and (\ref{yu-11-8-25}) are divided into several steps.
\par
    \noindent\emph{Step 1. We prove}
\begin{equation}\label{yu-11-8-12}
    T^*_{x_0}=\overline{T}\;\;\mbox{and}\;\;u^*_{x_0}=\bar{u}.
\end{equation}
\par
    By (\ref{yu-11-8-11}), (\ref{yu-11-8-10}), (\ref{yu-11-8-9}) and (\ref{yu-11-8-8}),
     using a very similar method to that used in the proof of (\ref{yu-10-28-13}), we can get (\ref{yu-11-8-12}).

\par
    \noindent\emph{Step 2. We prove (\ref{yu-11-8-26}).}
\par
    By (\ref{yu-11-8-9}) and (\ref{yu-11-8-6})$_3$, we see that  $\hat{z}\neq 0$. Meanwhile, it follows  by (\ref{yu-11-8-12}) and (\ref{yu-11-8-10}) that
\begin{equation*}\label{yu-11-8-16}
    \langle u^*_{x_0}(t),B^\top e^{A^\top(T^*_{x_0}-t)}\hat{z}\rangle_{\mathbb{R}^1}
    =\|B^\top e^{A^\top(T^*_{x_0}-t)}\hat{z}\|_{\mathbb{R}^1}=\max_{u\in B_1(0)}\langle u,B^\top e^{A^\top(T^*_{x_0}-t)}\hat{z}\rangle_{\mathbb{R}^1}\;\;\mbox{a.e.}\;\;t\in(0,T^*_{x_0}).
\end{equation*}
    Thus (\ref{yu-11-8-26}) holds.

\par
    \noindent\emph{Step 3. We prove $\hat{t}\in (T^*_{x_0}-\mathcal{O}_{\hat{z}})\cap (0,T^*_{x_0})$.}
\par
    First, it follows  by (\ref{yu-11-8-7}), (\ref{yu-11-8-8}) and (\ref{yu-11-8-12})$_1$ that $\hat{t}\in (0,T^*_{x_0})$. Second, it follows from  (\ref{yu-11-8-12})$_1$, (\ref{yu-11-8-9}), (\ref{yu-11-8-5}) and (\ref{yu-11-9-1})$_1$ that
\begin{equation*}\label{yu-11-8-17}
    B^\top e^{A^\top (T^*_{x_0}-\hat{t})}\hat{z}=\xi B^\top e^{-A^\top \hat{t}}\hat{f}=\xi B^\top \hat{e}=0,
\end{equation*}
which, along with (\ref{main-theorem-lemma1-1}) (where $z=\hat{z}$), yields that $\hat{t}\in T^*_{x_0}-\mathcal{O}_{\hat{z}}$.
\par
    \noindent\emph{Step 4. We prove (\ref{yu-11-8-25}).}
\par
    By the conclusions in \emph{Steps 1-3},
    we see that in order to have (\ref{yu-11-8-25}), it suffices to show $ \hat{t}\notin\mathbb{S}_{x_0}$.
    For this purpose, we use  (\ref{yu-11-8-9}) and (\ref{yu-11-8-6}) to get
\begin{equation*}\label{yu-11-8-19}
    B^\top e^{A^\top (T^*_{x_0}-t)}\hat{z}=\xi B^\top e^{-A^\top t}\hat{f}
    =\xi(t-\hat{t})^2[a_2+b_2(t)(t-\hat{t})],
\end{equation*}
    where $a_2\triangleq B^\top (-A^\top)^2e^{-A^\top\hat{t}}\hat{f}$  and $b_3(t)\triangleq\sum_{j=3}^{+\infty}B^\top(-A^\top)^je^{-A^\top \hat{t}}\hat{f}(t-\hat{t})^{j-3}$.
    This, together with (\ref{yu-11-8-12})$_2$,  (\ref{yu-11-8-10}) and (\ref{yu-11-8-6})$_3$, implies that
$$
    \lim_{t\to\hat{t}^+}u^*_{x_0}(t)=\lim_{t\to\hat{t}^-}u^*_{x_0}(t)=\frac{a_2}{\|a_2\|_{\mathbb{R}^1}}.
$$
   This, along with  (\ref{yu-9-27-10}), shows that $\hat{t}\notin \mathbb{S}_{x_0}$.

\section{Further information  on switching directions}\label{appendix-9}

 In this section, we will present some further information on  $\mathbb{D}_{x_0}$ (given by
\eqref{yu-9-27-10}) for some special cases.

\begin{theorem}\label{yu-proposition-10-16-1}
     Assume that $(\textbf{H})_{x_0}$ holds. If $n\geq 2$ and  $\mathbb{S}_{x_0}\neq \emptyset$,  then the following two conclusions are true:
\begin{description}
  \item[$(i)$]  If $\mbox{rank}(B)=1$, then
  $\mathbb{D}_{x_0}$ contains at most two vectors:
  $\frac{B^\top v}{\|B^\top v\|_{\mathbb{R}^m}}$ and $-\frac{B^\top v}{\|B^\top v\|_{\mathbb{R}^m}}$,
  where $v$ is a unit vector in the $1$-dim space: $\mbox{ker}(B^\top)^\bot$.
  \item[$(ii)$] If $\mbox{rank}(B,AB,\cdots,A^{n-1}B)=n$ and $\mbox{rank}(B)=n-1$, then $\mathbb{D}_{x_0}$ contains at most two vectors:
  $\frac{B^\top A^\top v}{\|B^\top A^\top v\|_{\mathbb{R}^m}}$ and $-\frac{B^\top A^\top v}{\|B^\top A^\top v\|_{\mathbb{R}^m}}$, where $v$ is a unit vector in the $1$-dim space: $\mbox{ker}(B^\top)$.

\end{description}
\end{theorem}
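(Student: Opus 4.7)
The plan is to apply Theorem \ref{theorem4.4,10-3}$(i)$ together with elementary linear algebra about $\mbox{Im}(B^\top)$ and the subspaces $\mathcal{H}_j$; since $\mathbb{S}_{x_0}\neq\emptyset$ and $n\geq 2$, the inclusion \eqref{yu-9-27-12} is in force, and both conclusions will follow once the 1-dimensional subspace in which the candidate switching directions must lie is identified explicitly.

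For conclusion $(i)$, I observe that $\mbox{rank}(B^\top)=\mbox{rank}(B)=1$, so $\mbox{Im}(B^\top)$ is a one-dimensional subspace of $\mathbb{R}^m$. Every vector appearing on the right-hand side of \eqref{yu-9-27-12} has the form $B^\top(A^\top)^{2j-1}h$ for some $h\in\mathcal{H}_{2j-1}$, and therefore lies in $\mbox{Im}(B^\top)$. Consequently $\mathbb{D}_{x_0}\subset \mbox{Im}(B^\top)\cap \partial B_1(0)$, a set containing at most two antipodal unit vectors. To identify them explicitly: since $v$ is a unit vector in $\mbox{ker}(B^\top)^\bot=\mbox{Im}(B)$, one has $B^\top v\neq 0$, and so the two unit vectors of $\mbox{Im}(B^\top)$ are precisely $\pm B^\top v/\|B^\top v\|_{\mathbb{R}^m}$, as claimed.

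For conclusion $(ii)$, the strategy is to compute the spaces $\mathcal{H}_j$ under the rank hypotheses. First I verify that $\mbox{rank}(B,AB)=n$: otherwise $A(\mbox{Im}(B))\subset \mbox{Im}(B)$, making $\mbox{Im}(B)$ an $A$-invariant subspace of dimension $n-1$, which by induction would force $\mbox{span}\{B,AB,\ldots,A^{n-1}B\}=\mbox{Im}(B)\neq\mathbb{R}^n$, contradicting the controllability assumption. From this, $\mathcal{H}_1=\mathbb{R}^n\cap \mbox{Im}(B)^\bot=\mbox{ker}(B^\top)=\mbox{span}\{v\}$, while for every $j\geq 2$ the orthogonal complement $\mbox{span}\{B,AB,\ldots,A^{j-1}B\}^\bot=\{0\}$, hence $\mathcal{H}_j=\{0\}$. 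Applying Theorem \ref{theorem4.4,10-3}$(i)$ then yields $\mathbb{D}_{x_0}\subset \partial B_1(0)\cap B^\top A^\top\mbox{span}\{v\}$. Finally, to see $B^\top A^\top v\neq 0$, I argue by contradiction: if it vanished, then combined with $B^\top v=0$ (which holds since $v\in\mbox{ker}(B^\top)$), we would obtain $v\perp(\mbox{Im}(B)+\mbox{Im}(AB))=\mathbb{R}^n$, forcing $v=0$ and contradicting $\|v\|_{\mathbb{R}^n}=1$. Hence $B^\top A^\top v\neq 0$, and the intersection of the one-dimensional subspace $B^\top A^\top\mbox{span}\{v\}\subset\mathbb{R}^m$ with $\partial B_1(0)$ consists exactly of the two vectors $\pm B^\top A^\top v/\|B^\top A^\top v\|_{\mathbb{R}^m}$.

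I do not foresee any serious obstacle: both parts reduce immediately to Theorem \ref{theorem4.4,10-3} once the ambient one-dimensional subspace is identified. The only delicate step is the preliminary rank computation in $(ii)$, which crucially uses the Kalman rank condition to rule out proper $A$-invariant subspaces containing $\mbox{Im}(B)$; the remaining identifications of $\mathcal{H}_j$ and the nonvanishing of $B^\top A^\top v$ are routine.
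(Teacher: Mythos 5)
Your proposal is correct, and for part $(i)$ it takes a genuinely different route from the paper. The paper proves $(i)$ from scratch: it chooses an orthonormal basis adapted to $\mbox{ker}(B^\top)$, writes $B^\top e^{A^\top(T^*_{x_0}-t)}z^*=f_n(t)\,B^\top e_n$ with $f_n$ real analytic, and then repeats the analyticity argument from the proof of Theorem \ref{yu-lemma-6-9-1} to conclude that $u^*_{x_0}$ is a two-valued step function taking the values $\pm B^\top e_n/\|B^\top e_n\|_{\mathbb{R}^m}$; the bound on $\mathbb{D}_{x_0}$ is then read off. You instead deduce $(i)$ directly from the inclusion \eqref{yu-9-27-12} of Theorem \ref{theorem4.4,10-3}: every candidate direction lies in the range of $B^\top$, which is one-dimensional when $\mbox{rank}(B)=1$, so $\mathbb{D}_{x_0}$ sits in a line intersected with the unit sphere. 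This is shorter and cleaner for the statement as written, but it proves strictly less than the paper's argument — the paper's proof also establishes that $u^*_{x_0}$ is a non-constant two-valued step function (i.e., case $(\textbf{A}_3)$ holds), a fact the authors record and use in note $(\textbf{e}_1)$; your route does not recover that. For part $(ii)$ your argument is essentially the paper's: the same $A$-invariance argument gives $\mbox{rank}(B,AB)=n$, and your identification $\mathcal{H}_1=\mbox{ker}(B^\top)$, $\mathcal{H}_j=\{0\}$ for $j\geq 2$, combined with Theorem \ref{theorem4.4,10-3}$(i)$, replaces the paper's explicit computation with $k(\hat t;\hat z^*)$; the nonvanishing of $B^\top A^\top v$ is proved identically (via $\mbox{ker}(B^\top)\cap\mbox{ker}(B^\top A^\top)=\mbox{span}\{B,AB\}^\bot=\{0\}$). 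All steps check out; in particular the identification of the unit vectors of the one-dimensional range of $B^\top$ as $\pm B^\top v/\|B^\top v\|_{\mathbb{R}^m}$ with $v\in\mbox{ker}(B^\top)^\bot$ a unit vector is valid because such $v$ cannot lie in $\mbox{ker}(B^\top)$.
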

    Several notes on Theorem \ref{yu-proposition-10-16-1} are given in order:
\begin{enumerate}
  \item[($\textbf{e}_1$)] In the case that $n\geq 2$, $\mbox{rank}(B)=1$, $\mathbb{S}_{x_0}\neq \emptyset$ and
  $(\textbf{H})_{x_0}$ holds, we have the following conclusions:
\begin{itemize}
\item  Any  optimal control satisfies   $(\textbf{A}_3)$ in Theorem \ref{yu-theorem-9-19-1}.
\item When $\sharp \mathbb{S}_{x_0}=1$, $\mathbb{D}_{x_0}$ is a singleton set, while when
    $\sharp \mathbb{S}_{x_0}\geq 2$, $\mathbb{D}_{x_0}$ consists of two opposite vectors.
\end{itemize}
The first conclusion above can be seen from the proof of  $(i)$ in Theorem \ref{yu-proposition-10-16-1},
  while the second one  follows from  $(i)$ of Theorem \ref{yu-proposition-10-16-1}.

  \item[($\textbf{e}_2$)] In the case that $n=3$,  $\mbox{rank}(B)=2$, $\mathbb{S}_{x_0}\neq \emptyset$ and
  $(\textbf{H})_{x_0}$ holds,  the assumption $\mbox{rank}(B,AB,A^2B)=3$ in $(ii)$ of Theorem \ref{yu-proposition-10-16-1} can be dropped.

  Indeed, in this case, the conclusions (\ref{yu-10-19-3}) and (\ref{yu-10-27-3}) in \emph{Step 1} of the proof of Theorem \ref{theorem4.4,10-3} are true. Thus, we have $k(\hat{t};\hat{z})=1$, where $k(\hat{t};\hat{z})$ is defined by (\ref{yu-10-19-3-b}).
  Then, by the same way as that used in the proof of \emph{Step 2} in $(ii)$ of  Theorem \ref{theorem4.4,10-3}, we can show that $\mathbb{D}_{x_0}$ has at most two directions. (We omit the details here.)

\item[($\textbf{e}_3$)]
  In the case that $n\geq 2$, $\mbox{rank}(B,AB,\cdots,A^{n-1}B)=n$, $\mbox{rank}(B)=n-1$, $\mathbb{S}_{x_0}\neq \emptyset$ and
  $(\textbf{H})_{x_0}$ holds, we have $B^\top A^\top v\neq 0$ for any unit vector $v\in \mbox{ker}(B^\top)$.
  This can be seen from (\ref{yu-11-10-11})$_2$ in the proof of $(ii)$ of Theorem \ref{yu-proposition-10-16-1}.
\end{enumerate}

\begin{proof}[Proof of Theorem \ref{yu-proposition-10-16-1}]
Let $u^*_{x_0}$ be the optimal control to $(\mathcal{TP})_{x_0}$.
Let  $z^*\in\mathbb{R}^n\setminus\{0\}$ be given by  $(iii)$ of Proposition \ref{pontryagin-maximum-principle} and let   $\hat{z}^*$ be given by (\ref{new2.24,10-22}).
 Since $z^*\neq 0$, it follows by (\ref{new2.24,10-22}) that
 \begin{eqnarray}\label{NEW2.24,10-25}
e^{-A^\top t}\hat z^*\neq 0\;\;\mbox{for each}\;\;t\in \mathbb{R}.
 \end{eqnarray}
     Arbitrarily fix $\hat{t}\in\mathbb{S}_{x_0}$. Let $k(\hat{t};\hat{z}^*)$ be given by (\ref{yu-10-19-3-b}).

\vskip 5pt

{\it We first prove $(i)$ of Theorem \ref{yu-proposition-10-16-1}.}
 Since $\mbox{rank}(B)=1$, we have
     \begin{eqnarray}\label{new2.25-10-22}
 \mbox{dim}[\mbox{ker}(B^\top)]=(n-1)\;\;\mbox{and}\;\;
 \mbox{dim}[\mbox{ker}(B^\top)]^\bot=1.
 \end{eqnarray}
    By \eqref{new2.25-10-22}, we can take an orthonormal basis $\{e_j\}_{j=1}^n$ of $\mathbb{R}^n$ so that
\begin{eqnarray}\label{2.24,10-22}
 e_1, e_2,\ldots, e_{n-1}\in \mbox{ker}(B^\top)\;\;\mbox{and}\;\;e_n\in [\mbox{ker}(B^\top)]^\bot.
 \end{eqnarray}
            For each $i=1,2,\ldots,n$, we define a function:
      \begin{eqnarray}\label{yu-9-15-2}
      f_i(t)\triangleq \langle e^{-A^\top t}\hat{z}^*,e_i\rangle_{\mathbb{R}^n},\;\;t\in \mathbb{R},
      \end{eqnarray}
    which  is  real analytical. By (\ref{yu-9-15-2}), we have
\begin{equation}\label{yu-9-15-4}
    e^{-A^\top t}\hat{z}^*= \sum_{j=1}^nf_j(t)e_j
    \;\;\mbox{for all}\;\;t\in\mathbb{R}^+.
\end{equation}
    By (\ref{yu-9-15-4}) and \eqref{2.24,10-22}, we have
    \begin{equation}\label{yu-9-15-5}
    B^\top e^{-A^\top t}\hat{z}^*=f_n(t)B^\top e_n\;\;\mbox{for all}\;\; t\in (0,T^*_{x_0}).
\end{equation}
  Meanwhile, by \eqref{2.24,10-22} and \eqref{new2.24,10-22}, we see
     \begin{eqnarray}\label{2.29,10-22}
   B^\top e_n\neq 0\;\;\mbox{and}\;\; B^\top e^{-A^\top t}\hat{z}^*=B^\top e^{A^\top (T^*-t)}z^*\;\;\mbox{for all}\;\;t\in (0,T^*_{x_0}).
   \end{eqnarray}
     From \eqref{main-theorem-lemma1-1}, \eqref{yu-9-15-5} and \eqref{2.29,10-22}, we find
   \begin{eqnarray}\label{2.30,10-22}
   \mathcal{O}_{z^*}=T^*_{x_0}-\{s\in \mathbb{R}: f_n(s)=0\}\;\;\mbox{and}\;\;B^\top e^{A^\top (T^*_{x_0}-t)}z^*= f_n(t)B^\top e_n\;\;\mbox{for each}\;t\in (0,T^*_{x_0}).
   \end{eqnarray}
   Finally, by \eqref{2.30,10-22}, we can use the same way as that used in the proof of \eqref{wang2.19}
   (there, the analyticity of $f_i$ was used)
      to see that for each $t\in (0,T^*_{x_0})$,
   \begin{eqnarray*}
    u^*_{x_0}(t)=\pm \frac{B^\top e_n}{\|B^\top e_n\|_{\mathbb{R}^m}}.
   \end{eqnarray*}
     Since $\mathbb{S}_{x_0}\neq\emptyset$, the above yields that $u^*_{x_0}(\cdot)$ is a non-constant valued step function
  taking  two values: $\frac{B^\top e_n}{\|B^\top e_n\|_{\mathbb{R}^m}}$ and  $-\frac{B^\top e_n}{\|B^\top e_n\|_{\mathbb{R}^m}}$.
    This, along with \eqref{yu-9-27-10} (which requires $\mathbb{S}_{x_0}\neq \emptyset$),
    leads to what follows: when $\mathbb{S}_{x_0}$ contains only one point,
    $\mathbb{D}_{x_0}$ contains only one direction (which is either $\frac{B^\top e_n}{\|B^\top e_n\|_{\mathbb{R}^m}}$ or  $-\frac{B^\top e_n}{\|B^\top e_n\|_{\mathbb{R}^m}}$), while when $\mathbb{S}_{x_0}$ contains more than one point,
    $\mathbb{D}_{x_0}=\left\{\frac{B^\top e_n}{\|B^\top e_n\|_{\mathbb{R}^m}}, -\frac{B^\top e_n}{\|B^\top e_n\|_{\mathbb{R}^m}}\right\}$.
    Thus,  we end the proof of the conclusion $(i)$ of Theorem \ref{yu-proposition-10-16-1}.
\vskip 5pt

 {\it We next show  $(ii)$ of Theorem \ref{yu-proposition-10-16-1}.}
 Suppose
 \begin{eqnarray}\label{4.9.11-14}
 \mbox{rank}(B,AB,\ldots,A^{n-1}B)=n\;\;\mbox{and}\;\;\mbox{rank}(B)=(n-1).
 \end{eqnarray}
 By \eqref{4.9.11-14}$_2$, we have
    \begin{eqnarray}\label{2.31,10-25}
    \mbox{dim}[\mbox{ker}(B^\top)]=1,
    \;\;\;\mbox{dim}[\mbox{ker}(B^\top)]^\bot=(n-1),\;\;\mbox{dim}[\mbox{span}\{B\}]=(n-1).
    \end{eqnarray}
    The rest of the proof is organized by two steps.
 \vskip 5pt
   \noindent {\it Step 1. We prove that $k(\hat{t};\hat{z}^*)=1$.}
\par
     We first claim that
\begin{equation}\label{yu-11-10-1}
    \mbox{dim}[\mbox{span}\{B,AB\}]=n.
\end{equation}
  By contradiction, we suppose that  \eqref{yu-11-10-1} is not true.
  Then we have
\begin{equation}\label{yu-11-10-10}
    \mbox{dim}[\mbox{span}\{B,AB\}]<n.
\end{equation}
     Meanwhile, by \eqref{2.31,10-25}$_3$, we have $\mbox{dim}[\mbox{span}\{B,AB\}]\geq (n-1)$. This, along with (\ref{yu-11-10-10}), yields that $\mbox{dim}[\mbox{span}\{B,AB\}]=(n-1)$.
     By this and by making use of \eqref{2.31,10-25}$_3$ again, we find
    $\mbox{span}\{B\}=\mbox{span}\{B,AB\}$.
    Thus we have
    $$
    \mbox{span}\{AB\}\subset \mbox{span}\{B\}.
    $$
    By the above, we see that, for each $u\in \mathbb{R}^m$, there
    are $\{v_j\}_{j=1}^{n-1}\subset\mathbb{R}^m$ so that
\begin{equation*}
    ABu=Bv_1,\;\;A^2Bu(=A(ABu)=ABv_1)=Bv_2,\;\;\dots,\;\;A^{n-1}Bu=Bv_{n-1}.
\end{equation*}
      The above leads to $\mbox{span}\{B,AB,\ldots, A^{n-1}B\}\subset \mbox{span}\{B\}$
    and then
$\mbox{rank}(B,AB,\cdots, A^{n-1}B)\leq (n-1)$,
    which contradicts \eqref{4.9.11-14}$_1$. So  (\ref{yu-11-10-1}) is true.
\par
    We now show $k(\hat{t};\hat{z}^*)=1$. By (\ref{yu-10-19-3-b}), (\ref{new2.24,10-22}) and the fact $\hat{t}\in\mathbb{S}_{x_0}$, we see that $k(\hat{t};\hat{z}^*)\geq 1$. Suppose that $k(\hat{t};\hat{z}^*)>1$. Then by (\ref{yu-10-19-3-b}), we have $B^\top e^{-A^\top\hat{t}}\hat{z}^*=0$ and $B^{\top}A^\top e^{-A^\top\hat{t}}\hat{z}^*=0$.
    These, along with  (\ref{yu-9-28-5}) and (\ref{yu-11-10-1}), yields
\begin{equation*}
    e^{-A^\top \hat{t}}\hat{z}^*\in \mbox{ker}(B^\top)\bigcap\mbox{ker}(B^\top A^\top)=\mbox{span}\{B,AB\}^\bot
    =\{0\}.
\end{equation*}
    which contradicts to (\ref{NEW2.24,10-25}). So we have $k(\hat{t};\hat{z}^*)=1$,

\vskip 5pt

\noindent \emph{Step 2. We  complete the proof of $(ii)$.}

Since $k(\hat{t};\hat{z}^*)=1$, we have
\begin{eqnarray}\label{NEW2.38,10-25}
e^{-A^\top\hat t}\hat z^*\in \mbox{ker}(B^\top)\setminus\{0\}
\;\;\mbox{and}\;\;B^\top A^\top e^{-A^\top \hat t}\hat z^*\neq 0.
\end{eqnarray}
By \eqref{2.31,10-25}$_1$, we have
\begin{eqnarray}\label{NEW2.40,10-25}
\mbox{ker}(B^\top)=\{\lambda \hat e : \lambda\in \mathbb{R}\},
\end{eqnarray}
where $\hat e$ is a unit vector in the 1-dim subspace $\mbox{ker}(B^\top)$.
By (\ref{NEW2.38,10-25}) and \eqref{NEW2.40,10-25},  we can find $\hat{\lambda}\in\mathbb{R}\setminus\{0\}$ so that
\begin{equation}\label{yu-11-10-11}
    e^{-A^\top\hat{t}}\hat{z}^*=\hat{\lambda}\hat{e}\;\;\mbox{and}\;\;B^\top A^\top e^{-A^\top\hat{t}}\hat{z}^*=\hat{\lambda}B^\top A^\top \hat{e}\neq 0.
\end{equation}
\par
    We now claim that
    \begin{eqnarray}\label{yu-10-27-2}
\lim_{t\to \hat t^-}u^*_{x_0}(t)=\pm\frac{B^\top A^\top \hat e}
{\|B^\top A^\top \hat e\|_{\mathbb{R}^m}}.
\end{eqnarray}
    (Notice that  (\ref{yu-11-10-11})$_2$ ensures that the right hand side of \eqref{yu-10-27-2} makes sense.)
    Indeed, by (\ref{main-theorem-lemma2-1}) and  (\ref{new2.24,10-22}), we have
    \begin{eqnarray*}
    \lim_{t\to \hat t^-}u^*_{x_0}(t)=
    \lim_{t\to \hat t^-}\frac{B^\top e^{-A^\top t}\hat{z}^*}
    {\|B^\top e^{-A^\top t}\hat{z}^*\|_{\mathbb{R}^m}}.
\end{eqnarray*}
    From the above and (\ref{yu-10-19-4}) (with $k(\hat{t};\hat{z}^*)=1$) and (\ref{yu-11-10-11})$_2$, we are led to \eqref{yu-10-27-2} at once.

    Since $\hat{t}\in\mathbb{S}_{x_0}$ was arbitrarily taken, we see, from (\ref{yu-10-27-2})
    and (\ref{yu-9-27-10}),
    that $\mathbb{D}_{x_0}\subset\left\{\frac{B^\top A^\top \hat e}
{\|B^\top A^\top \hat e\|_{\mathbb{R}^m}},\frac{B^\top A^\top \hat e}
{\|B^\top A^\top \hat e\|_{\mathbb{R}^m}}\right\}$.
\par
     In summary, we finish the proof of Theorem \ref{yu-proposition-10-16-1}.
\end{proof}

\section{Appendix}\label{appendix,11-3}
\subsection{Some results on $(\widetilde{\mathcal{TP}})_{x_0}$ given by (\ref{extended-optimal-time-problem})}\label{appendix-1}
\begin{lemma}\label{extended-time-problem-lemma1}
 Suppose that $(\widetilde{\mathcal{TP}})_{x_0}$ has an admissible control. Then the following conclusion are true:
$(i)$ $0< \widetilde T^*_{x_0} <+\infty$. $(ii)$ The problem $(\widetilde{\mathcal{TP}})_{x_0}$ has an optimal control $\tilde{u}^*_{x_0}$. $(iii)$ There exists $z^*\in \mbox{span}\{B,AB,\ldots,A^{n-1}B\}$ with the property that
\begin{eqnarray*}
B^\top e^{A^\top \left(\widetilde{T}^*_{x_0}-\cdot\right) } z^* \neq 0
\;\;\mbox{in}\;\;
\mathcal{C}((0,\widetilde{T}^*_{x_0});\mathbb R^m),
\end{eqnarray*}
so that for any optimal control $\tilde{u}^*_{x_0}$ of  $(\widetilde{\mathcal{TP}})_{x_0}$,
\begin{eqnarray*}
 \left\langle \tilde{u}^*_{x_0}(t),B^\top e^{ A^\top \left(\widetilde{T}^*_{x_0}-t\right) } z^* \right\rangle_{\mathbb R^m}
 =\max_{v\in B_1(0)}   \left\langle v,B^\top e^{ A^\top \left(\widetilde{T}^*_{x_0}-t\right) } z^* \right\rangle_{\mathbb R^m}
 \;\mbox{a.e.}\;
 t\in (0,\widetilde{T}^*_{x_0}).
\end{eqnarray*}
$(iv)$ Any optimal control $\tilde{u}^*_{x_0}$ has the following expression in $(0,\widetilde{T}^*_{x_0})$:
\begin{eqnarray}\label{main-theorem-lemma2-1-b}
\tilde{u}^*_{x_0}(t)=
\begin{cases}
    \frac{B^\top e^{A^\top(\widetilde{T}^*_{x_0}- t)} z^*}{\|B^\top e^{A^\top(\widetilde{T}^*_{x_0}- t) } z^* \|_{\mathbb R^m}}&\forall\;t\in (0,\widetilde{T}^*_{x_0})\setminus(\widetilde{T}^*_{x_0}-\mathcal{O}_{z^*}),\\
    \lim_{s\to t^-} \frac{B^\top e^{ A^\top(\widetilde{T}^*_{x_0}- s)} z^*}{\|B^\top e^{A^\top(\widetilde{T}^*_{x_0}- s) } z^* \|_{\mathbb R^m}} &\forall\; t\in (\widetilde{T}^*_{x_0}-\mathcal{O}_{z^*})\bigcap(0,\widetilde T^*_{x_0}),
\end{cases}
\end{eqnarray}
    where $z^*$ is given in (iii) and $\mathcal{O}_{z^*}$ is given by (\ref{main-theorem-lemma1-1}) with $z=z^*$.
$(v)$ The optimal control to $(\widetilde{\mathcal{TP}})_{x_0}$ has the bang-bang property, i.e.,
    $\|\tilde{u}^*(t)\|_{\mathbb{R}^m}=1$ a.e. $t\in (0,\widetilde{T}^*_{x_0})$.
$(vi)$ The optimal control to $(\widetilde{\mathcal{TP}})_{x_0}$ is unique.
\end{lemma}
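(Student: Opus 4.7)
My plan follows the classical optimal-control recipe: existence via weak-$\ast$ compactness, necessary conditions via Hahn--Banach separation applied to the reachable set, and uniqueness as an almost-everywhere consequence of the resulting explicit formula.

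For (i)-(ii), finiteness of $\widetilde{T}^*_{x_0}$ is built into the admissibility hypothesis, while $\widetilde{T}^*_{x_0} > 0$ comes from $x(t;x_0,u) = x_0 + O(t)$ uniformly in $u \in L^\infty(\mathbb{R}^+;B_1(0))$ together with $x_0 \neq 0$. Existence proceeds by a standard minimizing-sequence argument: take $(T_k,u_k)$ with $T_k \searrow \widetilde{T}^*_{x_0}$ and $x(T_k;x_0,u_k) = 0$, extend each $u_k$ by zero past $T_k$, extract a weak-$\ast$ limit $\tilde u^*$ in the weak-$\ast$ closed convex unit ball of $L^\infty(0,T_1;B_1(0))$, and pass to the limit in the Duhamel identity using continuity of the integral kernel.

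For (iii), I would apply Hahn--Banach to the reachable set
\[
R \triangleq \Big\{e^{A\widetilde{T}^*_{x_0}}x_0 + \int_0^{\widetilde{T}^*_{x_0}} e^{A(\widetilde{T}^*_{x_0}-s)}Bu(s)\,ds : u \in L^\infty(0,\widetilde{T}^*_{x_0};B_1(0))\Big\},
\]
which is convex and compact in $\mathbb{R}^n$. Time optimality forces $0 \in \partial R$, since $T \mapsto R(T)$ is Hausdorff-continuous and interior membership would let us hit $0$ strictly earlier, contradicting optimality. Separation yields $\zeta \in \mathbb{R}^n \setminus\{0\}$ with $\langle \zeta,y\rangle \leq 0$ on $R$ and equality at $y = 0$; decomposing $\zeta = z^* + w$ with $z^* \in V \triangleq \mbox{span}\{B,AB,\ldots,A^{n-1}B\}$ and $w \in V^\bot$, Cayley--Hamilton gives $B^\top e^{A^\top t}w \equiv 0$, so the separation reduces to a statement about $z^*$. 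Crucially, $z^*$ is determined by the pair $(R,0)$ rather than by any specific optimal control, so the a.e.\ maximum condition (obtained by squeezing between the integrated separation inequality and the integrated equality enjoyed by each optimal control) holds with the \emph{same} $z^*$ for every optimal control. Real analyticity of $t \mapsto B^\top e^{A^\top(\widetilde{T}^*_{x_0}-t)}z^*$ combined with $z^* \in V\setminus\{0\}$ then forces the non-vanishing asserted in the statement.

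Parts (iv)-(vi) follow quickly. At each $s$ where $w(s) \triangleq B^\top e^{A^\top(\widetilde{T}^*_{x_0}-s)}z^*$ is nonzero, strict convexity of $B_1(0)$ makes $w(s)/\|w(s)\|_{\mathbb{R}^m}$ the unique maximizer in $B_1(0)$ of $v \mapsto \langle v,w(s)\rangle_{\mathbb{R}^m}$, which forces (\ref{main-theorem-lemma2-1-b}) almost everywhere; on the finite exceptional set $\widetilde{T}^*_{x_0} - \mathcal{O}_{z^*}$ (finiteness supplied by Lemma \ref{main-theorem-lemma1}) the formula is extended by left limits. Bang-bang (v) is immediate since $\|w(s)/\|w(s)\|_{\mathbb{R}^m}\|_{\mathbb{R}^m} = 1$, and uniqueness (vi) collapses because any two optimal controls satisfy the same a.e.\ formula with the same $z^*$. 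The main technical obstacle I foresee is securing $z^* \neq 0$ in the separation step: if $\zeta$ happens to lie entirely in $V^\bot$ the maximum principle becomes vacuous, so one must identify the relative interior of $R$ inside the affine subspace $e^{A\widetilde{T}^*_{x_0}}x_0 + V$ via a Kalman-style controllability observation and confirm that $0$ is a genuine boundary point of $R$ in that relative topology.
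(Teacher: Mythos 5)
Your proposal is correct, and its overall architecture (weak-$\ast$ compactness for existence, separation of the reachable set for the maximum principle, the explicit formula for (iv)--(vi)) matches the paper's. The one place where you genuinely diverge is the point you yourself flag as the "main technical obstacle" in (iii), namely securing $z^*\neq 0$ when $k\triangleq\mbox{rank}(B,AB,\ldots,A^{n-1}B)<n$. The paper resolves this by performing the Kalman controllability decomposition explicitly: it changes coordinates by an orthogonal $P$ adapted to $V\triangleq\mbox{span}\{B,AB,\ldots,A^{n-1}B\}$, observes that admissibility forces the uncontrollable component of $Px_0$ to vanish, applies the standard maximum principle (Sontag, Theorem 47) to the reduced $k$-dimensional \emph{controllable} system to get a nonzero multiplier $\hat z_1^*$ with the nonvanishing property, and lifts it back via $z^*=P^{-1}(\hat z_1^*,0)^\top\in V$. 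Your alternative --- separating in the relative topology of the affine subspace $e^{A\widetilde T^*_{x_0}}x_0+V$ --- is a legitimate equivalent: note that $0\in R$ already forces $e^{A\widetilde T^*_{x_0}}x_0\in V$, hence $x_0\in V$ by $A$-invariance of $V$, so $R(T)\subset V$ for every $T$ and the moving-affine-subspace issue disappears; positive definiteness of the controllability Gramian restricted to $V$ gives nonempty relative interior, relative-interior membership of $0$ contradicts optimality via Hausdorff continuity of $T\mapsto R(T)$, and the separating functional can then be chosen in $V$, which combined with real analyticity and Cayley--Hamilton yields the nonvanishing in (iii). So your sketch closes its own gap, but you should be aware that this relative-interior argument is not optional decoration --- it \emph{is} the content of the degenerate case, and the paper's coordinate change is simply a way of outsourcing it to a textbook theorem. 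Everything else (the $z^*$ being attached to $(R,0)$ rather than to a particular optimal control, uniqueness of the pointwise maximizer off the finite nodal set, bang-bang, and uniqueness of the optimal control) is as in the paper.
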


\begin{proof}
Let  $x_0\in\mathbb{R}^n\setminus\{0\}$ be given such that $(\widetilde{\mathcal{TP}})_{x_0}$ has an admissible control.
   The proof of the conclusion $(i)$ is standard; the conclusion $(ii)$ follows from \cite[Theorem 3.11, p113]{WWXZ};
   the conclusion  $(iv)$ can be easily obtained from $(iii)$; the conclusion  $(v)$ follows from $(iv)$ at once;
   the conclusion $(vi)$ is a consequence of $(v)$ (see, for instance, \cite{Fatt} and \cite{WWXZ}).
   For the conclusion $(iii)$, we have not found suitable reference and so give its detailed proof.
\vskip 5pt
  \noindent  \emph{Proof of $(iii)$.} For the case when
    $\mbox{rank}(B,AB,\cdots, A^{n-1}B)=n$, we can use the standard argument in \cite[Theorem 47, pp. 433]{Sontag}
    to get the desired result.

    Now we consider the case that
   $k\triangleq\mbox{rank}(B,AB,\cdots, A^{n-1}B)<n$. We  first recall  the well-known Kalman controllability decomposition
    (see \cite[Lemma 3.3.3 and Lemma 3.3.4, pp. 93]{Sontag}) which is given in the manner:
   let
    $\{e_j\}_{j=1}^n$ be an orthonormal basis of $\mathbb{R}^n$ so that
    $\{e_j\}_{j=1}^k\subset \mbox{span}\{B,AB,\ldots,A^{n-1}B\}$ and
    $\{e_j\}_{j={k+1}}^n\subset \mbox{span}\{B,AB,\ldots,A^{n-1}B\}^\bot$. Define
\begin{equation}\label{yu-9-19-b-2}
    P\triangleq(e_1,e_2,\cdots,e_n)^{\top}.
\end{equation}
     Then
\begin{equation}\label{yu-9-17-50}
    PAP^{-1}=\left(
               \begin{array}{cc}
                 A_1 & A_2 \\
                 0 & A_3 \\
               \end{array}
             \right),\;\;PB=\left(
                              \begin{array}{c}
                                B_1 \\
                                0 \\
                              \end{array}
                            \right)
\end{equation}
    with $\mbox{rank}(B_1,A_1B_1,\cdots,A_1^{k-1}B_1)=k$, where $A_1\in\mathbb{R}^{k\times k}$,
    $A_2\in\mathbb{R}^{k\times (n-k)}$, $A_3\in\mathbb{R}^{(n-k)\times (n-k)}$ and
    $B_1\in\mathbb{R}^{k\times m}$.
         For each $u\in L^\infty(\mathbb{R}^+;B_1(0))$, let
\begin{equation}\label{yu-9-17-51-bb}
    \hat{x}(\cdot;u)=\left(
                       \begin{array}{c}
                         \hat{x}_1(\cdot;u) \\
                         \hat{x}_2(\cdot;u) \\
                       \end{array}
                     \right)
    \triangleq Px(\cdot;x_0,u)\;\;\mbox{and}\;\;\hat{x}_0=\left(
                       \begin{array}{c}
                         \hat{x}_{0,1} \\
                         \hat{x}_{0,2} \\
                       \end{array}
                     \right)\triangleq Px_0.
\end{equation}
    Then, by  (\ref{system}),
    $\hat{x}(\cdot;u)$ is the solution of the following equation:
\begin{equation}\label{yu-9-17-51}
\begin{cases}
    \hat{x}_1'(t)=A_1\hat{x}_1(t)+A_2\hat{x}_2(t) +B_1u(t),\\
    \hat{x}_2'(t)=A_3\hat{x}_2(t),
\end{cases}
    t\geq 0;\;\;\;\hat{x}(0)=\hat{x}_0,
\end{equation}
    where $A_1,A_2,A_3$ and $B_1$ are given by (\ref{yu-9-17-50}).
  Several facts are given in order. Fact One: Since $(\widetilde{\mathcal{TP}})_{x_0}$ has an admissible control,
    it follows by (\ref{yu-9-17-51-bb}) and (\ref{yu-9-17-51}) that
\begin{equation*}
    \hat{x}_{0,2}=0\;\; \mbox{and}\; \hat{x}_2(\cdot;u)\equiv0\;\;\mbox{for any}\;u\in L^\infty(\mathbb{R}^+;B_1(0)).
\end{equation*}
Fact Two: By Fact One, one can directly check that the problem $(\widetilde{\mathcal{TP}})_{x_0}$ is equivalent to the following problem
$(\widehat{\mathcal{TP}})_{\hat{x}_{0,1}}$:
\begin{eqnarray*}
  \widehat T^*_{\hat{x}_{0,1}}\triangleq   \inf\big\{\hat t>0:
  \exists\, u\in L^\infty(\mathbb R^+;B_1(0))
  ~~\mbox{s.t.}~~
  \hat{x}_1(\hat t;u)=0
  \big\},
\end{eqnarray*}
i.e., they have the same optimal time and an optimal control to one problem is that of another problem.
(Notice that  $\hat{x}_1(\cdot;u)$ solves the equation: $ \hat{x}_1'(t)=A_1\hat{x}_1(t)+B_1u(t),\;t>0$; $\hat{x}_1(0)=\hat{x}_{0,1}$.)
Fact Three: Since $\mbox{rank}(B_1,A_1B_1,\cdots,A_1^{k-1}B_1)=k$, we can use the standard argument in \cite[Theorem 47, pp. 433]{Sontag} to the problem $(\widehat{\mathcal{TP}})_{\hat{x}_{0,1}}$ to find
  $\hat{z}_1^*\in\mathbb{R}^k$ with
   \begin{equation}\label{yu-9-17-52}
    B_1^\top e^{A_1^\top(\widehat{T}^*_{\hat{x}_{0,1}}-\cdot)} \hat{z}_1^*\neq 0\;\;\mbox{in}\;
    \mathcal{C}((0,\widehat{T}^*_{\hat{x}_{0,1}});\mathbb{R}^m)
\end{equation}
     so that any optimal control $\hat{u}^*_{\hat{x}_{0,1}}$ to $(\widehat{\mathcal{TP}})_{\hat{x}_{0,1}}$ satisfies
     that for a.e. $ t\in (0,\widehat{T}^*_{\hat{x}_{0,1}})$,
\begin{equation*}
 \left\langle \hat{u}^*_{\hat{x}_{0,1}}(t),B_1^\top e^{ A_1^\top \left(\widehat{T}^*_{\hat{x}_{0,1}}-t\right) } \hat{z}^*_1 \right\rangle_{\mathbb R^m}
 =\max_{v\in B_1(0)}   \left\langle v,B_1^\top e^{ A_1^\top \left(\widehat{T}^*_{\hat{x}_{0,1}}-t\right) } \hat{z}^* \right\rangle_{\mathbb R^m}.
\end{equation*}

 We now let $z^*\triangleq P^{-1}\left(
                  \hat{z}^*_1,
                  0
              \right)^\top$.
 Then we have  that     $z^*\in\mathbb{R}^n\setminus\{0\}$ (see  (\ref{yu-9-17-52}));
     $z^*\in \mbox{span}\{B,AB,\ldots,A^{n-1}B\}$ (see (\ref{yu-9-19-b-2})); and
$B^\top e^{A^\top(\widehat{T}^*_{\hat{x}_{0,1}}-\cdot)}z^*=B_1^\top e^{A_1^\top(\widehat{T}^*_{\hat{x}_{0,1}}-\cdot)}\hat{z}^*_1$.
These, along with the above Fact Two and Fact Three, leads to $(iii)$.

   Thus we end the proof of Lemma \ref{extended-time-problem-lemma1}.
  \end{proof}

\subsection{Proof of Proposition \ref{pontryagin-maximum-principle}}\label{appendix-2}
    Before presenting the proof of Proposition \ref{pontryagin-maximum-principle}, we first give two lemmas.
\begin{lemma}\label{extended-time-problem-lemma2}
  Given $x_0\in \mathbb R^n$ and $T>0$, it holds that
\begin{eqnarray}\label{extended-time-problem-lemma2-1}
\left\{x(T;x_0,u) :  u\in \mathcal{PC}((0,T);B_1(0))\right\}
=\left\{x(T;x_0,u) :  u\in L^\infty(0,T;B_1(0))\right\}.
\end{eqnarray}
\end{lemma}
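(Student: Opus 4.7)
The plan is to reduce the equality, via the common shift $e^{AT}x_0$, to proving
\[
\Lambda\bigl(L^\infty(0,T;B_1(0))\bigr)=\Lambda\bigl(\mathcal{PC}((0,T);B_1(0))\bigr)\subset\mathbb{R}^n,
\]
where $\Lambda(u)\triangleq\int_0^T e^{A(T-s)}Bu(s)\,ds$; denote the two sides by $K_\infty$ and $K_{\mathcal{PC}}$. The inclusion $K_{\mathcal{PC}}\subset K_\infty$ is immediate. First I would record two structural facts: $K_\infty$ is convex and compact in $\mathbb{R}^n$ (weak-$*$ compactness of the $L^\infty$-ball plus weak-$*$ continuity of $\Lambda$), and $K_{\mathcal{PC}}$ is convex, since a convex combination of two elements of $\mathcal{PC}((0,T);B_1(0))$ stays in $B_1(0)$, remains left continuous, and has discontinuity set contained in the union of the originals. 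Setting $V\triangleq\mathrm{span}\{B,AB,\ldots,A^{n-1}B\}$, the Hamilton--Cayley theorem gives $K_\infty\subset V$, while a small-scaling argument (using that for each $v\in V$ one can solve $\Lambda(u)=v$ with some $u$ of arbitrarily small $L^\infty$-norm) places $0$ in the relative interior of $K_\infty$, so $\mathrm{aff}(K_\infty)=V$.

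The heart of the argument is to show $\mathrm{rel.bd}(K_\infty)\subset K_{\mathcal{PC}}$. Given $y^*\in\mathrm{rel.bd}(K_\infty)$, the supporting hyperplane theorem inside $V$ yields $\eta\in V\setminus\{0\}$ with
\[
\langle\eta,y^*\rangle_{\mathbb{R}^n}=\max_{y\in K_\infty}\langle\eta,y\rangle_{\mathbb{R}^n}=\int_0^T\|B^\top e^{A^\top(T-s)}\eta\|_{\mathbb{R}^m}\,ds.
\]
Set $f(s)\triangleq B^\top e^{A^\top(T-s)}\eta$. Then $f\not\equiv 0$: indeed $f\equiv 0$ would force $B^\top(A^\top)^k\eta=0$ for every $k\ge 0$, i.e.\ $\eta\in V^\perp$, whence $\eta\in V\cap V^\perp=\{0\}$, contradicting $\eta\neq 0$. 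Real analyticity of $f$ then makes $Z\triangleq\{s\in[0,T]:f(s)=0\}$ finite, so the prescription
\[
u^*(s)\triangleq\begin{cases}f(s)/\|f(s)\|_{\mathbb{R}^m},&s\in(0,T)\setminus Z,\\ \displaystyle\lim_{r\to s^-}f(r)/\|f(r)\|_{\mathbb{R}^m},&s\in Z\cap(0,T),\end{cases}
\]
is well-defined (the left limits exist by the local factorization $f(r)=(r-s)^k g(r)$ with $g(s)\neq 0$ at each $s\in Z$) and lies in $\mathcal{PC}((0,T);B_1(0))$.

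The crucial verification is $\Lambda(u^*)=y^*$. By construction $\Lambda(u^*)$ attains the supremum above, so it belongs to the exposed face $F_\eta\triangleq\{y\in K_\infty:\langle\eta,y\rangle_{\mathbb{R}^n}=\max\}$. Conversely, any $u\in L^\infty(0,T;B_1(0))$ with $\Lambda(u)\in F_\eta$ must satisfy $\langle f(s),u(s)\rangle_{\mathbb{R}^m}=\|f(s)\|_{\mathbb{R}^m}$ a.e., and Cauchy--Schwarz combined with $\|u(s)\|_{\mathbb{R}^m}\le 1$ then forces $u(s)=f(s)/\|f(s)\|_{\mathbb{R}^m}$ a.e.\ on $\{f\neq 0\}$. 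Since $\{f=0\}=Z$ has measure zero, the integral $\Lambda(u)$ is uniquely determined, so $F_\eta=\{\Lambda(u^*)\}$ collapses to a single point; as $y^*\in F_\eta$, we conclude $y^*=\Lambda(u^*)\in K_{\mathcal{PC}}$.

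Finally, every $y\in K_\infty$ lies on a chord inside $V$ joining two points of $\mathrm{rel.bd}(K_\infty)$ (by convexity of $K_\infty$ and nonemptiness of its relative interior), hence is a convex combination of two elements of $K_{\mathcal{PC}}$; the convexity of $K_{\mathcal{PC}}$ then gives $y\in K_{\mathcal{PC}}$. The main obstacle I expect is the face-collapsing step of the previous paragraph: it is here that the analyticity of the kernel $e^{A(T-\cdot)}B$ enters decisively, ensuring that the switching function $f$ vanishes only on a finite set rather than on a generic measure-zero set, so that the supporting functional $\eta$ determines $\Lambda(u^*)$ unambiguously and one need not wrestle with a higher-dimensional exposed face.
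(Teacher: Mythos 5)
Your proof is correct, but it takes a genuinely different route from the paper's. The paper argues by contradiction: it fixes a hypothetical target $x_1$ reachable by an $L^\infty$ control but by no piecewise continuous one, introduces the minimal norm problem $N(x_0,x_1)=\inf\{\|u\|_{L^\infty}: x(T;x_0,u)=x_1\}$, shows $0<N\le 1$, separates $x_1$ from the reachable set of the shrunken ball $B_{(1-\varepsilon)N}(0)$ by Hahn--Banach, lets $\varepsilon\to 0$ to obtain a multiplier $z^*$ and a maximum condition, and concludes that the minimal norm control agrees a.e.\ with $N$ times a bang-bang, hence piecewise continuous, control --- a contradiction. You instead work globally with the two reachable sets: you identify the affine hull of $K_\infty$ as the Kalman subspace $V$, show each relative boundary point is an \emph{exposed} point whose (singleton) face is attained by a piecewise continuous bang-bang control, and recover the relative interior by convexity of $K_{\mathcal{PC}}$ via chords. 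Both arguments rest on the same two pillars --- supporting hyperplanes for a convex reachable set, and analyticity of the switching function $s\mapsto B^\top e^{A^\top(T-s)}\eta$, which confines its zero set to a finite set. The paper's rescaling by $N$ converts an arbitrary reachable point into a boundary point of a scaled reachable set, so one separation argument covers every point at once, at the price of the auxiliary minimal norm problem and a limit in the multipliers $z_\varepsilon$. Your version buys a cleaner nondegeneracy argument: by choosing the supporting functional $\eta$ inside $V$ from the outset, the nontriviality $f\not\equiv 0$ follows from $V\cap V^{\perp}=\{0\}$, whereas the paper's limiting multiplier $z^*$ is only normalized in $\mathbb{R}^n$ and the nontriviality of its switching function is left implicit. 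The only degenerate case in your chord argument, $\dim V=0$, is excluded by the standing assumption $B\neq 0$.
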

\begin{proof}
    By contradiction, we suppose that \eqref{extended-time-problem-lemma2-1} is not true for some
     $x_0\in\mathbb R^n$ and $T>0$. Then since
     \begin{equation*}\label{extended-time-problem-lemma2-3}
  E_{\mathcal{PC}}\triangleq\left\{x(T;x_0,u):u\in \mathcal{PC}((0,T);B_1(0))\right\}\subset\left\{x(T;x_0,u):u\in L^{\infty}(0,T;B_1(0))\right\}
  \triangleq E_{\infty},
\end{equation*}
   we can find $\hat{u}\in L^{\infty}(0,T;B_1(0))\setminus\{0\}$ so that
\begin{equation}\label{yu-9-17-1}
    x_1\triangleq x(T;x_0,\hat{u})\in E_{\infty}\setminus E_{\mathcal{PC}}.
\end{equation}
    In what follows, we will find $\hat u^*\in \mathcal{PC}((0,T);B_1(0))$ so that
    $x(T;x_0,\hat u^*)=x_1$. When this is done, we have $x_1\in \mathcal{PC}((0,T);B_1(0))$ which contradicts
    \eqref{yu-9-17-1} and end the proof of this lemma.

    To get the above $\hat u^*$, we define the set:
    \begin{equation*}\label{yu-9-17-2}
    \mathcal{M}(T;x_0,x_1)\triangleq\{u\in L^\infty(0,T;B_1(0)): x(T;x_0,u)=x_1\}.
\end{equation*}
    It is obvious that $\mathcal{M}(T;x_0,x_1)\neq\emptyset$ (since $\hat{u}\in \mathcal{M}(T;x_0,x_1)$) and
    that $\mathcal{M}(T;x_0,x_1)$ is
    weakly star compact. We then consider the minimal norm
    control problem:
\begin{equation}\label{yu-9-17-3}
  N(x_0,x_1)\triangleq\inf\{\|u\|_{L^\infty(0,T;B_1(0))}:u\in\mathcal{M}(T;x_0,x_1)\}.
  \end{equation}
     We can easily check that  the problem \eqref{yu-9-17-3} has a solution $v^*$.
      We also have $N(x_0,x_1)\neq 0$, for otherwise, the function: $v^*\equiv 0$ (over $(0,T)$) belongs to
      $\mathcal{PC}((0,T);B_1(0))$ and satisfies that $x(T;x_0,v^*)=x_1$ which contradicts \eqref{yu-9-17-1}.
      So we have $1\geq N(x_0,x_1)>0$. Thus, for each $\varepsilon\in(0,1)$,
     \begin{equation}\label{yu-9-17-4}
    x_1\notin\{x(T;x_0,u):u\in L^\infty(0,T;B_{(1-\varepsilon)N(x_0,x_1)}(0))\} \triangleq
    \mathcal{Q}(B_{(1-\varepsilon)N(x_0,x_1)}(0);x_0).
\end{equation}
(Indeed, if  $x_1\in \mathcal{Q}(B_{(1-\varepsilon)N(x_0,x_1)}(0);x_0)$ for some $\varepsilon\in(0,1)$, then there is
$v_\varepsilon$ with $\|v_\varepsilon\|_{L^\infty(0,T;\mathbb{R}^m)}\leq (1-\varepsilon)N(x_0,x_1)$ so that
$x(T;x_0,v_\varepsilon)=x_1$. This yields $v_\varepsilon\in \mathcal{M}(T;x_0,x_1)$, which, together with \eqref{yu-9-17-3},
leads to the contradiction: $0<N(x_0,x_1)\leq \|v_\varepsilon\|_{L^\infty(0,T;\mathbb{R}^m)}\leq (1-\varepsilon)N(x_0,x_1)$.)
        Note that  $\mathcal{Q}(B_{(1-\varepsilon)N(x_0,x_1)}(0);x_0)$ is convex, closed and bounded.
   Thus by \eqref{yu-9-17-4}, we can apply  the Hahn-Banach theorem to find, for each  $\varepsilon\in(0,1)$, $z_\varepsilon\in\mathbb{R}^n$ with $\|z_\varepsilon\|_{\mathbb{R}^n}=1$ so that
\begin{equation}\label{yu-9-17-5}
    \langle x_1,z_\varepsilon\rangle_{\mathbb{R}^n} \geq \langle x(T;x_0,v),z_\varepsilon\rangle_{\mathbb{R}^n}\;\;\mbox{for any}\;v\in L^\infty(0,T;B_{(1-\varepsilon)N(x_0,x_1)}(0)).
\end{equation}

  Given  $u\in L^\infty(0,T;B_1(0))$ and $\varepsilon\in(0,1)$, we have that
$$
  v_\varepsilon\triangleq (1-\varepsilon)N(x_0,x_1)u\in L^\infty(0,T;B_{(1-\varepsilon)N(x_0,x_1)}(0)).
$$
       This, together with (\ref{yu-9-17-5}) and the fact that $x(T;x_0,v^*)=x_1$, yields that for any $u\in L^\infty(0,T;B_1(0))$,
\begin{equation}\label{yu-9-17-7}
    \int_0^T\langle B^\top e^{A^\top(T-t)}z_\varepsilon,v^*(t)\rangle_{\mathbb{R}^m}dt
    \geq (1-\varepsilon)N(x_0,x_1)\int_0^T\langle B^\top e^{A^\top(T-t)}z_\varepsilon,u(t)\rangle_{\mathbb{R}^m}dt.
\end{equation}
   Since $\|z_\varepsilon\|_{\mathbb{R}^n}=1$, there is a sequence
    $\{\varepsilon_j\}_{j\in\mathbb{N}}\subset(0,1)$ and $z^*\in\mathbb{R}^n$ with $\|z^*\|_{\mathbb{R}^n}=1$ such that $\varepsilon_j\to 0$ and $z_{\varepsilon_j}\to z^*$ as $j\to+\infty$.
    By taking
    $\varepsilon=\varepsilon_j$ in (\ref{yu-9-17-7}) and then sending $j\to+\infty$, we get that  for any $u\in L^\infty(0,T;B_1(0))$,
\begin{equation*}\label{yu-9-17-8}
    \int_0^T\langle B^\top e^{A^\top(T-t)}z^*,v^*(t)\rangle_{\mathbb{R}^m}dt
    \geq N(x_0,x_1)\int_0^T\langle B^\top e^{A^\top(T-t)}z^*,u(t)\rangle_{\mathbb{R}^m}dt.
\end{equation*}
    Using a standard argument involving the density of Lebesgue points in the above, we obtain
\begin{equation*}\label{yu-9-17-9}
    \langle B^\top e^{A^\top(T-t)}z^*,v^*(t)\rangle_{\mathbb{R}^m}
    =N(x_0,x_1)\max_{u\in B_1(0)}\langle B^\top e^{A^\top(T-t)}z^*,u\rangle_{\mathbb{R}^m}\;\;\mbox{for a.e.}\;t\in(0,T),
\end{equation*}
   from which, it follows that
\begin{equation*}\label{yu-9-17-10}
    v^*(t)=
    N(x_0,x_1)\frac{B^\top e^{A^\top(T-t)}z^*}{\|B^\top e^{A^\top(T-t)}z^*\|_{\mathbb{R}^m}}\;\;\mbox{for a.e.}\;
    t\in(0,T)\setminus (T-\mathcal{O}_{z^*}),
\end{equation*}
    where $\mathcal{O}_{z^*}$ is given by (\ref{main-theorem-lemma1-1}) with $z=z^*$.

  We now set
    \begin{equation*}
    \hat{u}^*(t)\triangleq
\begin{cases}
    N(x_0,x_1)\frac{B^\top e^{A^\top(T-t)}z^*}{\|B^\top e^{A^\top(T-t)}z^*\|_{\mathbb{R}^m}} &\mbox{if}\;t\in(0,T)\setminus (T-\mathcal{O}_{z^*}),\\
    N(x_0,x_1)\lim_{s\to t^-}\frac{B^\top e^{A^\top(T-s)}z^*}{\|B^\top e^{A^\top(T-s)}z^*\|_{\mathbb{R}^m}}&\mbox{if}\;t\in(T-\mathcal{O}_{z^*})\bigcap(0,T).
\end{cases}
\end{equation*}
    Since $\mathcal{O}_{z^*}\cap (0,T)$  contains at most finitely many elements (see Lemma \ref{main-theorem-lemma1} with $z=z^*$), we have
    that
    $$
    \hat{u}^*\in \mathcal{PC}((0,T);B_{N(x_0,x_1)}(0))
    \subset \mathcal{PC}((0,T);B_1(0))
    $$
    and that $\hat u^*(t)=v^*(t)$ for a.e. $t\in (0,T)$. The later yields that
    $x(T;x_0,\hat{u}^*)=x_1$. So $\hat u^*$ is desired.

    Hence, we finish the proof of Lemma \ref{extended-time-problem-lemma2}.
\end{proof}
\begin{lemma}\label{yu-lemma-10-11-1}
   Suppose that  $(\textbf{H})_{x_0}$ is true. Then
  $(\mathcal{TP})_{x_0}$ and $(\widetilde{\mathcal{TP}})_{x_0}$ are equivalent.
\end{lemma}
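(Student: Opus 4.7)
The plan is to build everything on top of Lemma \ref{extended-time-problem-lemma2}, which already identifies the reachable sets produced by $\mathcal{PC}$-controls with those produced by $L^\infty$-controls at every terminal time. Once that identification is available, the equivalence of the two problems is essentially a matter of bookkeeping.

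First, since $\mathcal{PC}(\mathbb{R}^+;B_1(0))\subset L^\infty(\mathbb{R}^+;B_1(0))$, every admissible control for $(\mathcal{TP})_{x_0}$ is admissible for $(\widetilde{\mathcal{TP}})_{x_0}$. So $(\textbf{H})_{x_0}$ forces $(\widetilde{\mathcal{TP}})_{x_0}$ to have an admissible control as well, and $\widetilde T^*_{x_0}\leq T^*_{x_0}$; in particular all conclusions of Lemma \ref{extended-time-problem-lemma1} become available. For the reverse inequality I would fix an arbitrary $\hat t>\widetilde T^*_{x_0}$, use the definition of $\widetilde T^*_{x_0}$ to produce a $v\in L^\infty(0,\hat t;B_1(0))$ with $x(\hat t;x_0,v)=0$, and then apply Lemma \ref{extended-time-problem-lemma2} on the interval $(0,\hat t)$ to get a $\mathcal{PC}((0,\hat t);B_1(0))$-control $\tilde v$ with $x(\hat t;x_0,\tilde v)=0$. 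Extending $\tilde v$ by $0$ on $(\hat t,+\infty)$ yields an admissible control for $(\mathcal{TP})_{x_0}$, so $T^*_{x_0}\leq \hat t$. Letting $\hat t\searrow \widetilde T^*_{x_0}$ gives $T^*_{x_0}=\widetilde T^*_{x_0}$.

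Next I would match up the optimal controls. The easy direction is immediate: any optimal $u^*_{x_0}$ for $(\mathcal{TP})_{x_0}$ lies in $L^\infty$, steers $x_0$ to $0$ at the common optimal time, and so is optimal for $(\widetilde{\mathcal{TP}})_{x_0}$. For the other direction, let $\tilde u^*_{x_0}$ be an optimal control for $(\widetilde{\mathcal{TP}})_{x_0}$. By Lemma \ref{extended-time-problem-lemma1}$(iv)$ it is represented a.e.\ on $(0,\widetilde T^*_{x_0})$ by the explicit formula \eqref{main-theorem-lemma2-1-b}, with $z^*\in\mbox{span}\{B,AB,\ldots,A^{n-1}B\}\setminus\{0\}$. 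Lemma \ref{main-theorem-lemma1} guarantees that $\mathcal{O}_{z^*}\cap (0,\widetilde T^*_{x_0})$ is a finite set, so the right-hand side of \eqref{main-theorem-lemma2-1-b} already defines a function in $\mathcal{PC}((0,\widetilde T^*_{x_0});B_1(0))$; extending by $0$ on $(\widetilde T^*_{x_0},+\infty)$ produces an element of $\mathcal{PC}(\mathbb{R}^+;B_1(0))$ which agrees a.e.\ with $\tilde u^*_{x_0}$ and therefore steers $x_0$ to $0$ at time $T^*_{x_0}=\widetilde T^*_{x_0}$, hence is optimal for $(\mathcal{TP})_{x_0}$.

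There is no real obstacle here; the content of the proof sits entirely inside Lemma \ref{extended-time-problem-lemma2} (which supplies the nontrivial $L^\infty\Rightarrow\mathcal{PC}$ approximation for reachable states) and the bang-bang formula \eqref{main-theorem-lemma2-1-b} combined with the nodal-set estimate of Lemma \ref{main-theorem-lemma1} (which is what upgrades the $L^\infty$ optimal control to a genuine piecewise continuous function with finitely many discontinuities). The only mild subtlety worth flagging is that the $L^\infty$ optimal control is a.e.-equivalence class whereas a $\mathcal{PC}$ control is a concrete function, so the correspondence must be phrased as ``the canonical $\mathcal{PC}$ representative given by \eqref{main-theorem-lemma2-1-b}''.
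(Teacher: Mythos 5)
Your proof is correct and follows essentially the same route as the paper: the equality of the optimal times rests on Lemma \ref{extended-time-problem-lemma2}, and the passage from an $L^\infty$ optimal control to a $\mathcal{PC}$ representative rests on the representation formula \eqref{main-theorem-lemma2-1-b} together with the finiteness of the nodal set from Lemma \ref{main-theorem-lemma1}. The only (immaterial) difference is that for $T^*_{x_0}\leq \widetilde T^*_{x_0}$ you let $\hat t\searrow \widetilde T^*_{x_0}$ after extending a near-optimal control by zero, whereas the paper argues by contradiction using the existence of an $L^\infty$ optimal control reaching $0$ exactly at time $\widetilde T^*_{x_0}$; also remember to set the value at $t=\widetilde T^*_{x_0}$ equal to the left limit (not $0$) so that the representative is genuinely left continuous.
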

\begin{proof}
    We divide the proof into several steps.
\vskip 5pt
  \noindent  \emph{Step 1. We show that $
 T^*_{x_0}=\widetilde T^*_{x_0}<+\infty$,
   where $\widetilde T^*_{x_0}$ is the optimal time of $(\widetilde{\mathcal{TP}})_{x_0}$ defined by (\ref{extended-optimal-time-problem}).}
\par
    By the assumption $(\textbf{H})_{x_0}$, we can use a standard way to get
$T^*_{x_0}<+\infty$.
Meanwhile, since $\mathcal{PC}(\mathbb{R}^+;B_1(0))\subset L^{\infty}(\mathbb{R}^+;B_1(0))$,
we have $\widetilde{T}^*_{x_0}\leq T^*_{x_0}$.

Now by  contradiction, we suppose that the conclusions in {\it Step 1} is not true.
Then we have
$\widetilde{T}^*_{x_0}<T^*_{x_0}$.
   From $(ii)$ in Lemma \ref{extended-time-problem-lemma1}, we can find
$\tilde{u}^*_{x_0}\in L^{\infty}(\mathbb{R}^+; B_1(0))$ so that
  $x(\widetilde{T}^*_{x_0};x_0,\tilde{u}^*_{x_0})=0$. Then by Lemma \ref{extended-time-problem-lemma2}, there is  $\hat u\in \mathcal{PC}(\mathbb{R}^+; B_1(0))$ so that
  $x(\widetilde{T}^*_{x_0};x_0,\hat u)=x(\widetilde{T}^*_{x_0};x_0,\tilde{u}^*_{x_0})=0$,
which, together with the optimality of  $T^*_{x_0}$ (see (\ref{time-optimal-problem})), indicates that
$T^*_{x_0}\leq \widetilde{T}^*_{x_0}$. This leads to a contradiction. So conclusions in {\it Step 1} are true.

\vskip 5pt
 \noindent   \emph{Step 2. We prove that if $\tilde{u}^*_{x_0}$ is an optimal control to $(\widetilde{\mathcal{TP}})_{x_0}$, then there is $v^*\in \mathcal{PC}(\mathbb{R}^+;B_1(0))$, with $v^*(t)=\tilde{u}^*_{x_0}(t)$ for a.e. $t\in \mathbb{R}^+$,
 so that $v^*$ is an optimal control to $(\mathcal{TP})_{x_0}$.}

\par
   By $(vi)$ of Lemma \ref{extended-time-problem-lemma1},
   $\tilde{u}^*_{x_0}|_{(0,\widetilde{T}^*_{x_0})}$  has the expression
    (\ref{main-theorem-lemma2-1-b}). By the definition of an optimal control to $(\widetilde{\mathcal{TP}})_{x_0}$
   (see Section \ref{introduce}), we have $\tilde{u}^*_{x_0}(t)=0$ for a.e. $t\in (\widetilde{T}^*_{x_0},+\infty)$. We now define a function
   $v^*: \mathbb{R}^+\rightarrow\mathbb{R}^m$ in the manner:
   $v^*(t)\triangleq\tilde{u}^*_{x_0}|_{(0,\widetilde{T}^*_{x_0})}(t)$ for each $t\in (0, \widetilde{T}^*_{x_0})$;
   $v^*(\widetilde{T}^*_{x_0})\triangleq\lim_{s\to \widetilde{T}_{x_0}^{*-}}\tilde{u}^*_{x_0}(s)$;
   $v^*(t)\triangleq 0$ for each $t\in (\widetilde{T}^*_{x_0},+\infty)$. Then we have that
   $v^*=\tilde{u}^*_{x_0}$ in $L^\infty(\mathbb{R}^+;\mathbb{R}^m)$; $v^*\in \mathcal{PC}(\mathbb{R}^+;B_1(0))$; and
   $x(T^*_{x_0};x_0,v^*)=0$ (Here, we used the above  {\it Step 1}.) These imply that $v^*$ is an optimal control to $(\mathcal{TP})_{x_0}$.

 \vskip 5pt

 \noindent {\it Step 3. It is clear that any optimal control to $(\mathcal{TP})_{x_0}$ is an optimal control to $(\widetilde{\mathcal{TP}})_{x_0}$, since $\mathcal{PC}(\mathbb{R}^+;B_1(0))\subset L^\infty(\mathbb{R}^+;B_1(0))$
 and $T^*_{x_0}=\widetilde T^*_{x_0}$.}

Hence, we end the proof of    Lemma \ref{yu-lemma-10-11-1}.
\end{proof}

    \vskip 5pt

\begin{proof}[Proof of Proposition \ref{pontryagin-maximum-principle}]

    The conclusions in Proposition \ref{pontryagin-maximum-principle} follow from
 Lemma \ref{extended-time-problem-lemma1} and Lemma \ref{yu-lemma-10-11-1}.
\end{proof}

\vskip 10pt

\textbf{Acknowledgments.} The authors thank Prof. Andrei A. Agrachev for introducing the book \cite{Agrachev-Sachkov} and Ph.D Thesis \cite{Biolo}.

 \end{document}